\documentclass[11pt]{article}

\usepackage{graphicx}
\usepackage[a4paper,margin=1in]{geometry}
\usepackage{amsmath,amssymb,amsthm,mathtools}
\usepackage{enumitem}
\usepackage{booktabs}
\usepackage{xcolor}
\usepackage{hyperref}
\usepackage{tikz}
\usepackage[affil-it]{authblk}

\hypersetup{
	colorlinks=true,
	linkcolor=blue!60!black,
	citecolor=blue!60!black,
	urlcolor=blue!60!black
}

\newtheorem{theorem}{Theorem}[section]
\newtheorem{lemma}[theorem]{Lemma}

\newtheorem{corollary}[theorem]{Corollary}
\newtheorem{definition}[theorem]{Definition}
\newtheorem{remark}[theorem]{Remark}

\DeclareMathOperator{\mex}{mex}

\title{Asymptotic Bounds for Irredundant Covers of Square Grids by \(2\times2\) Cards}
\author{ Aksel Eruysal\\
	\texttt{akseleruysal@gmail.com}\\
	\texttt{ İzmir, Turkey}
	\and 
	S. Kaan G\"{u}rb\"{u}zer\\
	\texttt{kaan.gurbuzer@deu.edu.tr}\\
	\texttt{Department of Mathematics, Dokuz Eyl\"{u}l University \\ 
		İzmir, Turkey}
}	
\date{}

\begin{document}
	
	\maketitle
	
	\begin{abstract}
		
		
		We study an irredundant covering of an $m \times m$ square grid by $2 \times 2$ cards, where cards may overlap and every card must lie entirely inside the grid. Every square of the grid must be covered, and every card must contain at least one square that is covered only by itself. Let \(F(m)\) denote the maximum number of cards in such an irredundant cover. We develop a general framework for bounding the maximum possible size of such an irredundant cover based on square multiplicities, double counting, and local geometric restrictions arising around highly covered squares. These arguments yield a general upper bound whose leading term is $\frac{1}{2}m^2$, accompanied by a negative linear term. In the opposite direction, we construct a family of irredundant covers based on a density-$\frac{1}{2}$ covering pattern, referred to as the period-four staircase pattern, together with a boundary-repair construction. Thus, we obtain a lower bound with the same leading term and a linear-order error, proving that for every \(m\ge6\),
		\[
		\frac{2m}{9}
		\le
		\frac12m^2-F(m)
		\le
		2m-3,
		\]
		and consequently that the maximum density of an irredundant cover tends to $\frac{1}{2}$ as $m\to\infty$. The known $10\times 10$ case is used as motivation and as a finite benchmark for the general theory.

	\end{abstract}
	\noindent\textbf{Keywords.}
	Irredundant cover; square grid; \(2\times2\) card; private square; extremal
	combinatorics
	
	\section{Introduction}\label{sec:Intro}
	
	In this paper, we develop a general framework for bounding the maximum number of
	\(2\times2\) cards in an irredundant cover of a square grid. An irredundant cover
	by \(2\times2\) cards must satisfy the following properties: cards may overlap;
	however, every card must contain at least one private square, that is, a square
	covered by that card and by no other card, and every square of the grid must be
	covered.
	
	We define \(F(m)\) to be the maximum number of \(2\times2\) cards in an
	irredundant cover of an \(m\times m\) grid. We bound \(F(m)\) using the
	multiplicities of the grid squares, which partition the squares according to
	the number of cards covering them, together with other combinatorial arguments
	arising from the geometry of the grid and the irredundancy condition.
	In particular, by analyzing these multiplicity classes, we obtain several local
	restrictions that can be used to bound \(F(m)\).
	
	The fundamental difficulty of the problem arises from the tension between
	preserving irredundancy and maximizing the number of cards used. Adding new
	cards may increase the size of the cover; however, it also increases overlap
	and may cause private squares to be lost, thereby destroying irredundancy.
	A significant part of the analysis in this paper therefore focuses on
	understanding how these two opposing effects are quantitatively balanced.
	
	From this perspective, the problem may be viewed as a geometric and highly
	structured instance of the maximum minimal set-cover problem, which has been
	studied more generally in the combinatorial optimization literature. Indeed,
	a cover is irredundant precisely when it is inclusion-wise minimal: if a
	selected card can be removed without destroying coverage, then that card has
	no private square; conversely, any card possessing a private square is
	indispensable to the cover. Maximum minimal set cover, together with its dual
	formulation as the maximum minimal hitting set problem, has previously been
	studied in general set systems and hypergraphs; see, for example,
	\cite{berge1989,eitergottlob1995,araujoetal2023,damascenoetal2026}.
	
	This connection can be expressed more explicitly in the language of hypergraph
	transversals. Let \(\mathcal C_m\) denote the set of all possible card
	positions, and define the dual board hypergraph \(\mathcal H_m\) by taking the
	elements of \(\mathcal C_m\) as its vertices and, for each square \(x\in G_m\) (with \(G_m\) defined in
	Section~\ref{sec:definitions}), defining the hyperedge

	\[
	E_x=\{C\in\mathcal C_m:x\in C\}.
	\]
	Then a family \(U\subseteq\mathcal C_m\) covers the grid if and only if \(U\)
	is a transversal of \(\mathcal H_m\). Similarly, \(U\) is an irredundant cover
	if and only if it is a minimal transversal: a card \(C\in U\) has a private
	square precisely when there exists some \(x\in G_m\) such that
	\[
	E_x\cap U=\{C\}.
	\]
	Thus, our problem can be reformulated as
	\[
	F(m)=\Upsilon(\mathcal H_m),
	\]
	where \(\Upsilon\) denotes the upper transversal number, that is, the maximum
	cardinality of a minimal transversal of a hypergraph. General results
	concerning upper transversals can be found in
	\cite{henningyeo2018,henningyeo2019,henningyeo2020}, while related algorithmic
	and dualization problems are studied in
	\cite{fredmankhachiyan1996,borosetal2001,damaschke2011}.
	
	However, the hypergraph considered here is not an arbitrary rank-four
	hypergraph. The hypergraph \(\mathcal H_m\) has a highly rigid local structure
	arising from the geometric arrangement of \(2\times2\) squares on a grid,
	together with additional restrictions near the boundary. General upper-transversal results do not directly exploit the
	boundary cancellation and local grid restrictions used here.
	Using this geometric structure, we establish
	$F(m)=\frac12m^2-\Theta(m)$, determining the leading asymptotic
	coefficient and showing that the deficit from $\frac12m^2$
	has linear order. The private-square
	condition is also closely analogous to the private-neighbour characterization
	used for irredundant and minimal dominating sets in graph theory
	\cite{cockaynehedetniemi1978,bazganetal2018}. Related geometric covering and
	packing problems involving squares have been studied in
	\cite{elkhechenetal2009,biedletal2021,baloghetal2025}, while exact
	computational work on grid graphs provides a useful methodological comparison
	\cite{alankoetal2011}.
	
	An important finite instance of the problem, namely the \(10\times10\) case,
	previously appeared as Problem 37 of the 56th Leningrad City Mathematical
	Olympiad \cite{lmo1990archive,vasilievfomin1990}. For this instance, the optimal
	value is known to be
	\[
	F(10)=39.
	\]
	The cited solution provides both an irredundant cover using \(39\) cards
	and an upper-bound argument showing that every cover using \(40\) or
	more cards must be redundant \cite{fomin2025}.

	The purpose of this paper is to analyze the general \(m\times m\)
	square grid systematically. The main question we focus on is
	how the maximum size of an irredundant cover scales as \(m\)
	gets larger.
	
	
	First, we obtain a defect identity using the multiplicities of the squares,
	
	$$
	5|U|=2m^2-D_U,
	$$
	
	where \(|U|\) denotes the number of selected cards and \(D_U\) records the net contribution of surplus private squares and regions of high covering multiplicity.
	
	Then, using the surplus of private boundary squares together with an interior multiplicity count, we derive a boundary-cancellation identity that leads to our main upper bound. Using the local restrictions around the squares covered four times, we derive, for \(m\geq4\), the bound
	
	$$
	F(m)\leq
	\left\lfloor
	\frac{3m^2-12m+16\left\lfloor 2m/3\right\rfloor}{6}
	\right\rfloor,
	$$
	
	which in particular implies
	
	$$
	F(m) \leq \frac{1}{2}m^2 - \frac{2}{9}m.
	$$
	
	In the opposite direction, we construct a family of irredundant covers using a period-four staircase pattern together with a boundary repair construction. This gives, for \(m\geq6\), the lower bound
	
	$$
	F(m) \geq \frac{1}{2}m^2 - 2m + 3.
	$$
	
	Combining the upper and lower bounds, we show that
	
	$$
	F(m) = \frac{1}{2}m^2 - \Theta(m),
	$$
	
	and therefore
	
	$$
	\lim_{m\to\infty}\frac{F(m)}{m^2}=\frac{1}{2}.
	$$
	
	Thus, although the exact value of \(F(m)\) is not determined for every \(m\), the leading asymptotic behavior of the problem is determined.

	\section{Preliminaries}
	\label{sec:definitions}
	In this section, we introduce the notation and definitions used throughout the paper. First, we define the square grid for \(m\ge2\) as the Cartesian product $\{1,\ldots,m\}\times\{1,\ldots,m\}$ and denote it by $G_m$. We represent each card using its upper-left corner square. For each pair $(a,b)$ of integers, where \(1\le a,b\le m-1\), the card $C_{a,b}$ is defined by the set \(\{(a,b),(a+1,b),(a,b+1),(a+1,b+1)\}\) of pairs of integers. Then, the set containing all possible cards can be given as follows.
	\[
	\mathcal C_m=\{C_{a,b}:1\le a,b\le m-1\}
	\]
	Hence, $|\mathcal C_m|=(m-1)^2.$ The illustration of a card is given in Figure \ref{fig:card}.
	\begin{figure}[h]
		\centering
		\begin{tikzpicture}[scale=1.75]
			\draw[step=1,gray!50,thin] (0,0) grid (4,4);
			\fill[blue!12] (1,1) rectangle (3,3);
			\draw[blue!70!black,very thick] (1,1) rectangle (3,3);
			\node at (1.5,2.5) {\(\scriptstyle(a,b)\)};
			\node at (2.5,2.5) {\(\scriptstyle(a,b+1)\)};
			\node at (1.5,1.5) {\(\scriptstyle(a+1,b)\)};
			\node at (2.5,1.5) {\(\scriptstyle(a+1,b+1)\)};
			\node[blue!70!black] at (2,-0.45) {\(C_{a,b}\)};
		\end{tikzpicture}
		\caption{The \(2\times2\) card \(C_{a,b}\).}
		\label{fig:card}
	\end{figure}
	
	\begin{definition}
		The \text{boundary squares} of the grid $G_m$ are:
		\[
		\partial G_m = \{(i,j) \in G_m : i \in \{1,m\} \text{ or } j \in \{1,m\}\}.
		\]
		The set of \text{interior squares} is denoted by $I_m$ and it is equal to the set $G_m \setminus \partial G_m$. So, a card $C_{a,b}$ is called a boundary card if and only if $a \in \{1, m-1\}$ or $b \in \{1, m-1\}.$
	\end{definition}

	\begin{definition}
		Any non-empty subset $U$ of $\mathcal C_m$ is called a configuration.
	\end{definition}
	
	\begin{definition}
		For a given configuration $U,$ the multiplicity $\mu(x)$ of a square \(x\in G_m\) is the number of cards $C_{a,b}\in U$ containing $x,$ that is,
		\[
		\mu(x)=|\{C_{a,b}\in U:x\in C_{a,b}\}|.
		\]
		
		A square \(x\) is private for a selected card \(C_{a,b}\in U\) if $x\in C_{a,b}$
		and $\mu(x)=1.$
	\end{definition}
	
	\begin{definition}
		The configuration \(U\) is called  an irredundant cover of \(G_m\) if
		\begin{enumerate}[label=(\roman*)]
			\item \(\mu(x)\ge1\) for every \(x\in G_m\);
			\item every selected card \(C_{a,b}\in U\) contains at least one private square.
		\end{enumerate}
	\end{definition}
	
	As stated in Section \ref{sec:Intro}, the function $F:\mathbb{Z}_{>1} \rightarrow \mathbb{Z}_{>0}$ maps a positive integer $m>1$ to the maximum cardinality among irredundant covers of $G_m,$ that is,
	\[
	F(m)=\max\{|U|: U\subseteq\mathcal C_m \text{ is an irredundant cover of }G_m\}
	\]
	
	For a selected configuration, we partition the squares in $G_m$ with respect to their multiplicities as follows:
	\[
	S_j=\{x\in G_m:\mu(x)=j\},
	\qquad
	s_j=|S_j|.
	\]
	We observe that there can be at most $4$ cards that cover a single grid square due to the geometry of the cards. Hence, for an irredundant cover, the multiplicity $j$ can be at most $4$. Now, let $U$ be a given irredundant cover of $G_m$. The number $p_U$ counts how many private squares remain after assigning
	one private square to each selected card, which is precisely $s_1-|U|.$

	\begin{definition}
		The defect $D_U$ of an irredundant cover $U$ is defined as \( p_U-(s_3+2s_4). \)
	\end{definition}
	The role of this definition will become clear in Theorem \ref{D_U theo}, where it yields an exact identity relating \(D_U\) and \(|U|\).

	At the end of this section, we consider an important situation involving squares of multiplicity four. Let \(x=(i,j)\in S_4\).  A boundary square can be covered by at most two cards, so \(x\) must be an interior square and \(2\le i,j\le m-1\).  Since  $x$ is covered four times, all four possible cards containing \(x\) are selected, namely they are  \(C_{i-1,j-1}, C_{i-1,j}, C_{i,j-1}, C_{i,j}.\) Hence, we call the set
	\[
	\Phi(x)=\{i-1,i,i+1\}\times\{j-1,j,j+1\}
	\]
	the \(3\times3\) footprint of \(x\).
	
	\begin{figure}[h]
		\centering
		\begin{tikzpicture}[scale=0.75]
			\draw[step=1,gray!50,thin] (0,0) grid (3,3);
			\foreach \x/\y/\t in {0/2/1,1/2/2,2/2/1,0/1/2,1/1/4,2/1/2,0/0/1,1/0/2,2/0/1} {
				\node at (\x+0.5,\y+0.5) {\(\t\)};
			}
			\node[blue!70!black] at (0.5,3.35) {\(\scriptstyle p_{NW}\)};
			\node[blue!70!black] at (2.5,3.35) {\(\scriptstyle p_{NE}\)};
			\node[blue!70!black] at (0.5,-0.35) {\(\scriptstyle p_{SW}\)};
			\node[blue!70!black] at (2.5,-0.35) {\(\scriptstyle p_{SE}\)};
			\draw[blue!70!black,very thick] (0,0) rectangle (3,3);
		\end{tikzpicture}
		\caption{Multiplicity pattern in an \(S_4\)-footprint contributed by the $4$ cards containing the \(S_4\) square.  The four corners are
			the only possible private squares for the four cards containing the center.}
		\label{fig:s4-footprint}
	\end{figure}
	

	\section{The defect framework and boundary upper bounds}
	
	In this section, we develop the main framework used to obtain our stronger upper bounds for \(F(m)\). We begin by counting grid squares according to their covering multiplicities and derive the defect identity, which expresses the number of selected cards in terms of the surplus of private squares and the contribution of highly covered squares. We then incorporate the geometry of the boundary, where the possible card configurations are more restricted, and combine the resulting boundary private-square surplus with an interior area count. This produces a boundary-cancellation identity in which the contribution of triple-covered squares disappears. We first control the remaining fourfold-covered squares through a packing argument, obtaining an intermediate bound. Finally, we use the local structure of an \(S_4\)-footprint to relate fourfold-covered squares to nearby double-covered interior squares, yielding the stronger uniform upper bound.
	
	\subsection{The defect identity}
	
	\label{sec:defect-identity}
	
	Because a grid square can lie in at most four axis-aligned \(2\times2\) cards, every covered square must satisfy $1\le\mu(x)\le4.$
	
	The sets \(S_1,S_2,S_3,S_4\) partition \(G_m\), hence
	\begin{equation}
		s_1+s_2+s_3+s_4=m^2.
		\label{eq:grid-area}
	\end{equation}
	
	For the second equation, we count the total number of card-square incidences, that is, the total number of covered grid-square occurrences counted with multiplicity. Since each card \(C^i\) covers \(4\) squares and there are \(|U|\) selected cards, the total number of incidences is \(4|U|\). On the other hand, counting the same incidences using the square multiplicities gives

	\begin{equation}
		s_1+2s_2+3s_3+4s_4=4|U|.
		\label{eq:incidence-general}
	\end{equation}
	
	The set \(S_1\) is exactly the set of private squares. Since every selected card has at least one private square and a private square belongs to exactly one selected card, we have
	\[
	s_1\ge |U|.
	\]
	
	\begin{theorem}
		For every irredundant cover of \(G_m\), with \(m\ge2\),
		\[
		{
			5|U|=2m^2-D_U,
			\qquad
			D_U=(s_1-|U|)-(s_3+2s_4).
		}
		\]
		\label{D_U theo}
	\end{theorem}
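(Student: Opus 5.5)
The identity is a linear consequence of the two counting equations \eqref{eq:grid-area} and \eqref{eq:incidence-general}, so the plan is to eliminate \(s_2\) between them and then rearrange. Note first that irredundancy is not actually needed for the identity itself. It only enters in the interpretation \(p_U=s_1-|U|\ge 0\). Also, \(D_U\) is well defined for any configuration in which every square has multiplicity between \(1\) and \(4\), which holds for every cover as recorded at the start of Section~\ref{sec:defect-identity}.

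Concretely, I would multiply \eqref{eq:grid-area} by \(2\) to get \(2s_1+2s_2+2s_3+2s_4=2m^2\). Subtracting \eqref{eq:incidence-general} from this gives
\[
s_1-s_3-2s_4=2m^2-4|U|.
\]
Next I would subtract \(|U|\) from both sides, which yields
\[
(s_1-|U|)-(s_3+2s_4)=2m^2-5|U|,
\]
that is, \(D_U=2m^2-5|U|\). Rearranging gives the claimed \(5|U|=2m^2-D_U\).

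The only points needing care are that the partition \(S_1,\dots,S_4\) really exhausts \(G_m\), with no \(S_0\) because of the covering condition (i) and no multiplicity above \(4\), and that the incidence count \eqref{eq:incidence-general} uses the fact that each card contains exactly four squares of the grid, since cards lie entirely inside \(G_m\). Both are already established in the text preceding the theorem, so I expect no genuine obstacle. The main thing to get right is the bookkeeping of signs when forming \(D_U\).
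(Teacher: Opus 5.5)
Your proposal is correct and is essentially the paper's proof: both eliminate \(s_2\) by combining twice the area equation with the incidence equation to get \(s_1-s_3-2s_4=2m^2-4|U|\), then shift by \(|U|\) (the paper phrases this step as substituting \(s_1=|U|+p_U\)). Your remark that irredundancy is not needed for the identity itself is also accurate: the identity uses only coverage and \(\mu\le 4\), and irredundancy matters only for reading \(p_U=s_1-|U|\) as a nonnegative surplus.
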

	
	\begin{proof}
		Subtract \eqref{eq:grid-area} twice from
		\eqref{eq:incidence-general}. This gives the following:
		\[
		4|U|-2m^2=-s_1+s_3+2s_4.
		\]
		Since \(s_1=|U|+p_U\), we get
		\[
		4|U|-2m^2=-(|U|+p_U)+s_3+2s_4.
		\]
		Therefore
		\[
		5|U|=2m^2-p_U+s_3+2s_4.
		\]
		Finally, by the definition of \(D_U\), we obtain \(5|U|=2m^2-D_U\).
	\end{proof}
	
	\begin{corollary}
		For the \(10\times10\) grid,
		\[
		5|U|=200-D_U,
		\qquad
		D_U=(s_1-|U|)-(s_3+2s_4).
		\]
	\end{corollary}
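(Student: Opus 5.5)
This corollary is the case \(m=10\) of Theorem~\ref{D_U theo}, so the plan is to specialize that identity rather than recount anything. The theorem holds for every irredundant cover of \(G_m\) with \(m\ge 2\). Since \(10\ge 2\), every irredundant cover \(U\) of \(G_{10}\) satisfies
\[
5|U|=2\cdot 10^2-D_U,\qquad D_U=(s_1-|U|)-(s_3+2s_4).
\]

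The only computation is \(2\cdot 10^2=200\), which gives \(5|U|=200-D_U\). The expression for \(D_U\) carries over unchanged, because its definition does not involve \(m\).

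For completeness, the identity can also be rederived directly from the two counting equations in this setting. The grid-area equation \eqref{eq:grid-area} becomes \(s_1+s_2+s_3+s_4=100\). The incidence equation \eqref{eq:incidence-general} gives \(s_1+2s_2+3s_3+4s_4=4|U|\). Subtracting twice the first from the second yields \(4|U|-200=-s_1+s_3+2s_4\). Substituting \(s_1=|U|+p_U\) then gives \(5|U|=200-p_U+s_3+2s_4\), which is exactly \(200-D_U\).

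There is no real obstacle here; the only thing to confirm is that \(m=10\) lies in the range \(m\ge2\) where the theorem applies.
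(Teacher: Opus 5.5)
Your proposal is correct and matches the paper: the corollary is simply Theorem~\ref{D_U theo} with \(m=10\), so \(2m^2=200\). Your optional direct rederivation (subtracting twice the area equation from the incidence equation and substituting \(s_1=|U|+p_U\)) is exactly the paper's proof of that theorem, carried out with \(m=10\).
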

	
	\begin{remark}
		For \(m=10\), it suffices to prove that \(D_U>0\) in order to obtain the sharp upper bound \(|U|\le 39\). Indeed, since
		\[
		D_U=200-5|U|,
		\]
		we have \(D_U\equiv 0 \pmod{5}\). Thus, \(D_U>0\) implies \(D_U\ge 5\), and consequently
		\[
		200-5|U|\ge 5,
		\]
		which gives \(|U|\le 39\).
	\end{remark}
	This observation explains why the defect identity is useful: the sharp \(10\times10\) bound is equivalent to proving a small but positive global defect. However, one should not forget that the main focus is on the general \(m \times m\) bound in this paper.
	
	\subsection{Boundary cancellation and the first \texorpdfstring{\(S_4\)}{S4} bound}
	
	We now combine boundary private-square surplus with an interior area identity.
	Adding the two equations cancels the \(s_3\)-term, leaving only \(s_4\).  In
	this section \(s_4\) is controlled by a packing argument, which gives the
	intermediate asymptotic constant \(14/27\).  The following section improves
	this further by coupling \(S_4\)-squares to nearby double-covered interior
	squares.
	
	We begin by controlling the number of cards that can occur along the boundary of the grid. The irredundancy condition imposes a simple local restriction on consecutive boundary-card positions, which will give us an upper bound on the total number of selected boundary cards.
	
	A side-card overlap pair is a pair consisting of a
	selected card and a side of the grid touched by that card.  The four corner
	cards
	\[
	C_{1,1},\quad C_{1,m-1},\quad C_{m-1,1},\quad C_{m-1,m-1}
	\]
	are forced for \(m\ge3\), because the four corner grid squares have unique
	covering cards.  Each corner card has two side-card overlap pairs.  Every other
	boundary card touches exactly one side and has one side-card incidence.
	
	Let \(N_e\) denote the number of selected non-corner boundary cards, and let
	\(N_{\mathrm{int}}\) denote the number of interior cards. Then the following equation holds:
	\begin{equation}
		|U|=4+N_e+N_{\mathrm{int}}.
		\label{eq:N-split}
	\end{equation}
	
	\begin{lemma}[Line-of-three lemma]
		No three consecutive side-card positions can all be selected along the same side of the grid.
	\end{lemma}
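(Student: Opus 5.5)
Fix a side, say the top side (row $1$). The side-card positions there are $C_{1,b}$ for $1\le b\le m-1$, and we take "consecutive" to mean three positions $C_{1,b-1},C_{1,b},C_{1,b+1}$ with $2\le b\le m-2$. We argue by contradiction: suppose all three lie in the irredundant cover $U$, and show that the middle card $C_{1,b}$ has no private square. This contradicts condition (ii) in the definition of an irredundant cover. The other three sides are handled the same way by the symmetries of the grid.

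The middle card is $C_{1,b}=\{(1,b),(1,b+1),(2,b),(2,b+1)\}$. The left neighbour $C_{1,b-1}$ contains columns $b-1$ and $b$ in rows $1$ and $2$, so it contains $(1,b)$ and $(2,b)$. The right neighbour $C_{1,b+1}$ contains columns $b+1$ and $b+2$ in rows $1$ and $2$, so it contains $(1,b+1)$ and $(2,b+1)$. Hence every square of $C_{1,b}$ lies in at least one other selected card. Each such square therefore has $\mu\ge2$, and none is private for $C_{1,b}$, which is the desired contradiction.

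One point to state carefully is which positions count. If a triple includes a corner card, for instance $b-1=1$, the argument is unchanged, because it only uses that the two flanking positions are selected and never uses whether they are corner or non-corner cards. So the lemma holds for every run of three consecutive positions along a side, corners included.

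There is no real obstacle here; the argument uses only the geometry of the cards and never the boundary itself. The boundary matters only later, when the lemma is used to bound $N_e$.
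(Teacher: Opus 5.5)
Your proposal is correct and is essentially the paper's own argument: the left column of the middle card $C_{1,b}$ is covered by $C_{1,b-1}$ and its right column by $C_{1,b+1}$, so $C_{1,b}$ has no private square, and the other sides follow by symmetry. Your remark that the argument does not depend on whether the flanking cards are corner cards is accurate. It also matches how the paper applies the lemma to all $m-1$ positions on a side.
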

	
	\begin{proof}
		Consider the top side. If
		\[
		C_{1,b},\qquad C_{1,b+1},\qquad C_{1,b+2}
		\]
		are all selected, then the middle card \(C_{1,b+1}\) has no private square. Its two left squares are covered by \(C_{1,b}\), while its two right squares are covered by \(C_{1,b+2}\). This contradicts irredundance. The same argument applies to the other three sides.
	\end{proof}
	
	\begin{figure}[h]
		\centering
		\begin{tikzpicture}[scale=1]
			\draw[step=1,gray!45,thin] (0,0) grid (7,2);
			\node[left] at (0,1.5) {\(\text{top side}\)};
			\foreach \x/\c in {1/blue!15,2/red!15,3/blue!15} {
				\fill[\c] (\x,0) rectangle (\x+2,2);
				\draw[very thick] (\x,0) rectangle (\x+2,2);
			}
			\node at (2,2.35) {\(\scriptstyle C_{1,b}\)};
			\node at (3,2.35) {\(\scriptstyle C_{1,b+1}\)};
			\node at (4,2.35) {\(\scriptstyle C_{1,b+2}\)};
			\node[red!70!black] at (3,-0.45) {middle card has no private square};
		\end{tikzpicture}
		\caption{Three consecutive side-card positions cannot all be selected.}
		\label{fig:side-three}
	\end{figure}
	
	\begin{lemma}[Number of non-corner boundary cards]
		For every \(m\ge3\),
		\[
		{
			N_e\le4\left\lfloor\frac{2m}{3}\right\rfloor-8.
		}
		\]
	\end{lemma}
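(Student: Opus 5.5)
The plan is to bound the number of selected cards along each side separately, using the Line-of-three lemma, and then add up over the four sides. Fix the top side. The side-card positions there are \(C_{1,b}\) for \(1\le b\le m-1\). These are \(n=m-1\) consecutive positions, and the two end positions \(b=1\) and \(b=m-1\) are corner cards. By the Line-of-three lemma, the set of selected indices \(b\) contains no three consecutive integers.

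The first step is a purely combinatorial count: a subset of \(\{1,\dots,n\}\) with no three consecutive elements has at most \(\lceil 2n/3\rceil\) elements. To see this, write \(n=3k+r\) with \(r\in\{0,1,2\}\). Split \(\{1,\dots,n\}\) into the \(k\) blocks \(\{3t+1,3t+2,3t+3\}\) for \(0\le t<k\), plus a remainder block of size \(r\). Each full block contains at most \(2\) selected indices, and the remainder block contains at most \(r\). This gives at most \(2k+r\) indices, and \(2k+r=\lceil 2n/3\rceil\) for \(r=0,1,2\).

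The second step is the arithmetic identity \(\lceil 2(m-1)/3\rceil=\lfloor 2m/3\rfloor\). Writing \(m=3k+r+1\), both sides equal \(2k+r\) in each of the three residue cases. Hence at most \(\lfloor 2m/3\rfloor\) cards are selected along the top side. Both corner cards \(C_{1,1}\) and \(C_{1,m-1}\) are forced (for \(m\ge3\)), so at most \(\lfloor 2m/3\rfloor-2\) non-corner cards are selected along this side.

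The final step is to sum over the four sides, which requires that no non-corner boundary card is counted twice. For \(m\ge4\), a non-corner card such as \(C_{1,b}\) with \(2\le b\le m-2\) has \(a=1\neq m-1\) and \(b\notin\{1,m-1\}\). So it touches exactly one side, as noted before \eqref{eq:N-split}, and the per-side counts of non-corner cards add without overlap. This yields \(N_e\le 4\lfloor 2m/3\rfloor-8\).

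For \(m=3\), every card position is a corner card, so \(N_e=0=4\cdot2-8\) and the bound holds trivially. I expect no real obstacle. The only points needing care are the floor/ceiling identity and the observation that corner cards are excluded from \(N_e\) while still being counted against each side's budget.
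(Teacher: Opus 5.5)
Your proof is correct and follows essentially the same route as the paper: the line-of-three lemma gives at most $\lceil 2(m-1)/3\rceil=\lfloor 2m/3\rfloor$ selected positions per side, and the forced corner cards are then removed. The paper removes them globally by counting $8+N_e$ side-card incidences, whereas you subtract the two corners side by side and use that each non-corner boundary card lies on exactly one side, which is the same bookkeeping; your block argument also supplies the no-three-consecutive count that the paper states without proof.
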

	
	\begin{proof}
		On one side of \(G_m\), there are \(m-1\) possible side-card positions. By the line-of-three lemma, no three consecutive positions may all be selected. Therefore, among these \(m-1\) positions, at most
		\[
		\left\lceil\frac{2(m-1)}{3}\right\rceil
		=
		\left\lfloor\frac{2m}{3}\right\rfloor
		\]
		can be selected.
		The displayed identity can be obtained using residues modulo \(3\): if
		\(m=3q,3q+1,3q+2\), both sides are respectively \(2q,2q,2q+1\).
		
		Summing over the four sides gives at most
		\[
		4\left\lfloor\frac{2m}{3}\right\rfloor
		\]
		selected side-card incidences.  The four corner cards contribute \(8\), 2 per corner, while every non-corner boundary
		card contributes one incidence and one boundary card.  Hence
		\[
		\#\{\text{side-card overlap pairs}\}=8+N_e
		\]
		and therefore
		\[
		8+N_e
		\le
		4\left\lfloor\frac{2m}{3}\right\rfloor.
		\]
		Therefore
		\[
		N_e\le4\left\lfloor\frac{2m}{3}\right\rfloor-8.
		\]
	\end{proof}
	
	Having bounded the number of selected boundary cards, we now use the same boundary structure to count private squares. This allows us to express the contribution of the boundary to the private-square surplus in terms of \(N_e\).
	
	Let $s_{1,\mathrm{b}}$ be the number of boundary squares covered exactly once; in other words boundary squares which are also a private square. Boundary squares have multiplicity either \(1\) or \(2\), so the total boundary multiplicity is
	\[
	s_{1,\mathrm{b}}+2((4m-4)-s_{1,\mathrm{b}}).
	\]
	
	We count the same quantity by studying boundary cards as follows:\\
	A corner card covers
	three boundary squares: the grid corner itself and the two adjacent boundary
	squares.  Thus the four corner cards contribute \(4\cdot3=12\) boundary
	card-square incidences.  A non-corner boundary card touches exactly one side
	and covers exactly two boundary squares on that side, so the \(N_e\)
	non-corner boundary cards contribute \(2N_e\) boundary card-square incidences.
	Therefore
	\[
	s_{1,\mathrm{b}}+2((4m-4)-s_{1,\mathrm{b}})=12+2N_e.
	\]
	Solving for $s_{1,\mathrm{b}}$ gives
	\begin{equation}
		{s_{1,\mathrm{b}}=8m-20-2N_e.}
		\label{eq:Pb}
	\end{equation}
	
	The number of boundary cards is \(4+N_e\). Therefore the boundary private-square surplus which counts how many private squares remain on the boundaries after assigning boundary cards to private squares is exactly:
	\[
	s_{1,\mathrm{b}}-(4+N_e)=8m-24-3N_e.
	\]
	
	Let \(s_{1,\mathrm{int}}\) be the number of private squares in the interior \((m-2)\times(m-2)\) sub-grid, and define
	\[
	E=s_{1,\mathrm{int}}-N_{\mathrm{int}}.
	\]
	Every interior card has all four of its squares in the interior.  Therefore
	its required private square is also an interior square.  Since distinct cards
	cannot share a private square, the interior private squares assigned to
	interior cards are distinct, and hence $E\ge0.$
	Since $s_1=s_{1,\mathrm{b}}+s_{1,\mathrm{int}}$
	and $|U|=(4+N_e)+N_{\mathrm{int}}$,
	we obtain
	\begin{equation}
		{p_U=8m-24-3N_e+E.}
		\label{eq:p-boundary-cancellation}
	\end{equation}

	Substituting \eqref{eq:p-boundary-cancellation} into the defect identity gives
	\[
	5|U|
	=
	2m^2-(8m-24-3N_e+E)+s_3+2s_4.
	\]
	Thus
	\begin{equation}
		{
			5|U|
			=
			2m^2-8m+24+3N_e-E+s_3+2s_4.
		}
		\label{eq:first-general}
	\end{equation}
	
	To obtain a second relation involving the same quantities, we count the squares in the interior of the grid according to their multiplicities.
	
	The interior of the grid has \((m-2)^2\) squares. Boundary squares have multiplicity at most \(2\), so all squares in \(S_3\) and \(S_4\) lie in the interior. Similarly, let \(s_{2,\mathrm{int}}\) be the number of interior squares covered twice. Then
	\[
	s_{1,\mathrm{int}}+s_{2,\mathrm{int}}+s_3+s_4=(m-2)^2.
	\]
	Since
	\[
	s_{1,\mathrm{int}}=N_{\mathrm{int}}+E
	\]
	and
	\[
	N_{\mathrm{int}}=|U|-N_e-4,
	\]
	we get
	\[
	(|U|-N_e-4)+E+s_{2,\mathrm{int}}+s_3+s_4=(m-2)^2.
	\]
	Solving for \(|U|\),
	\begin{equation}
		{
			|U|
			=
			m^2-4m+8+N_e-E-s_{2,\mathrm{int}}-s_3-s_4.
		}
		\label{eq:second-general}
	\end{equation}
	
	The advantage of these two relations is that the \(s_3\)-terms occur with opposite signs. Adding them therefore eliminates the contribution of triple-covered squares and leaves an identity involving only the boundary term, the interior private-square surplus, the double-covered interior squares, and the fourfold-covered squares. This gives the following boundary-cancellation identity.
	
	\begin{theorem}[Boundary-cancellation identity]
		For every irredundant cover of \(G_m\), with \(m\ge3\),
		\begin{equation}
			{
				6|U|
				=
				3m^2-12m+32+4N_e-2E-s_{2,\mathrm{int}}+s_4.
			}
			\label{eq:cancellation-general}
		\end{equation}
	\end{theorem}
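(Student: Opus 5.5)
The identity should follow by adding \eqref{eq:first-general} and \eqref{eq:second-general}, exactly as the paragraph preceding the statement announces. Both relations hold for every irredundant cover with \(m\ge3\). The first comes from the defect identity of Theorem~\ref{D_U theo} combined with the boundary private-square count \eqref{eq:p-boundary-cancellation}. The second comes from the interior area count together with the split \eqref{eq:N-split}. So no new combinatorial input is needed: the whole proof is bookkeeping of these two linear identities.

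Concretely, I would write \eqref{eq:first-general} as
\[
5|U|=2m^2-8m+24+3N_e-E+s_3+2s_4,
\]
and \eqref{eq:second-general} as
\[
|U|=m^2-4m+8+N_e-E-s_{2,\mathrm{int}}-s_3-s_4.
\]
Summing the left sides gives \(6|U|\). On the right I would collect terms one by one:
\begin{itemize}
\item the \(m^2\) terms give \(3m^2\), and the linear terms give \(-12m\);
\item the constants give \(24+8=32\);
\item the boundary terms give \(3N_e+N_e=4N_e\), and the surplus terms give \(-2E\);
\item the term \(-s_{2,\mathrm{int}}\) appears only once;
\item the \(s_3\) terms cancel, and the \(s_4\) terms combine to \(2s_4-s_4=s_4\).
\end{itemize}
This yields \eqref{eq:cancellation-general}.

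The only point needing care is that the hypotheses behind both ingredients are in force. I would remark that \(m\ge3\) guarantees three facts. First, the four corner cards are distinct and forced, which is used in \eqref{eq:N-split} and in the count of \(12\) boundary incidences. Second, boundary squares have multiplicity at most \(2\), so \(S_3,S_4\subseteq I_m\). Third, every interior card has its private square in \(I_m\), which justifies \(E\ge0\) and \(s_{1,\mathrm{int}}=N_{\mathrm{int}}+E\).

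I expect no real obstacle. The main risk is an arithmetic slip in the constants, namely \(-8m-4m\) and \(24+8\), or in the sign of the \(s_4\) coefficient. I would therefore double-check the result on the forced configuration for a small case such as \(m=3\) or \(m=4\).
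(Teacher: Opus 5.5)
Your proposal is correct and is exactly the paper's proof: adding \eqref{eq:first-general} and \eqref{eq:second-general} cancels the \(s_3\)-terms and combines the \(s_4\)-terms to \(2s_4-s_4=s_4\), which gives the identity. One small precision point: \(s_{1,\mathrm{int}}=N_{\mathrm{int}}+E\) is just the definition of \(E\), and \(E\ge0\) is not used in the identity at all (only in the later upper bounds), so the facts you actually need are the four forced, distinct corner cards, the boundary incidence count, and \(S_3\cup S_4\subseteq I_m\).
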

	
	\begin{proof}
		Add \eqref{eq:first-general} and \eqref{eq:second-general}:
		\[
		\begin{aligned}
			6|U|
			=
			\bigl(2m^2-8m+24+3N_e-E+s_3+2s_4\bigr)
			+
			\bigl(m^2-4m+8+N_e-E-s_{2,\mathrm{int}}-s_3-s_4\bigr)
		\end{aligned}
		\]
		Collecting terms gives
		\[
		6|U|
		=
		3m^2-12m+32+4N_e-2E
		-s_{2,\mathrm{int}}+(s_3-s_3)+(2s_4-s_4).
		\]
		The \(s_3\)-terms cancel, and the remaining \(S_4\)-contribution is \(s_4\).
		This is exactly \eqref{eq:cancellation-general}.
	\end{proof}
	
	This cancellation is the main algebraic advantage of separating boundary
	private-square surplus from interior area.  It turns the triple-covered squares
	into a cancelled variable and leaves the remaining task of controlling
	\(N_e\), \(E\), \(s_{2,\mathrm{int}}\), and \(s_4\).

	It remains to control the contribution of \(s_4\) in the boundary-cancellation identity. For this purpose, we now turn from the preceding counting argument to the local geometry around a fourfold-covered square. The irredundancy condition forces a particularly rigid configuration around every square in \(S_4\), and this local structure will allow us to bound how densely such squares can occur.

	\begin{lemma}[External cards meet a footprint corner]
		\label{lem:external-footprint-card}
		Let \(x=(i,j)\in S_4\).  If a \(2\times2\) card meets \(\Phi(x)\) but is not one
		of the four cards containing \(x\), then it covers at least one corner of
		\(\Phi(x)\). 
	\end{lemma}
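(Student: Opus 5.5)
The statement is purely geometric: it does not use irredundancy at all, only the position of the card relative to the footprint \(\Phi(x)=\{i-1,i,i+1\}\times\{j-1,j,j+1\}\). I will parametrize an arbitrary card by its upper-left corner, \(C_{a,b}\), occupying rows \(\{a,a+1\}\) and columns \(\{b,b+1\}\). The goal is to show that \(C_{a,b}\) covers one of the four corner squares \((i\pm1,j\pm1)\) whenever two things hold: \(C_{a,b}\cap\Phi(x)\neq\emptyset\), and \((a,b)\notin\{i-1,i\}\times\{j-1,j\}\).

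First I reduce to one coordinate at a time. The card meets \(\Phi(x)\) exactly when its row set \(\{a,a+1\}\) meets \(\{i-1,i,i+1\}\) and its column set \(\{b,b+1\}\) meets \(\{j-1,j,j+1\}\). For rows this means \(a\in\{i-2,i-1,i,i+1\}\), and similarly \(b\in\{j-2,j-1,j,j+1\}\). So there are \(16\) candidate positions, and removing the four cards containing \(x\) leaves \(12\).

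The key observation is the following. Take \(a\in\{i-2,i-1,i,i+1\}\). Then \(\{a,a+1\}\) contains \(i-1\) or \(i+1\) in every case except \(a\in\{i-1,i\}\), where it contains \(i\) together with \(i-1\) or \(i+1\). So in fact \(\{a,a+1\}\) always contains one of \(i-1,i+1\):
\begin{itemize}
\item for \(a=i-2\), it contains \(i-1\);
\item for \(a=i-1\), it contains \(i-1\);
\item for \(a=i\), it contains \(i+1\);
\item for \(a=i+1\), it contains \(i+1\).
\end{itemize}
The same holds for columns: \(\{b,b+1\}\) always contains \(j-1\) or \(j+1\). Hence any card meeting \(\Phi(x)\) contains a square \((r,c)\) with \(r\in\{i-1,i+1\}\) and \(c\in\{j-1,j+1\}\), which is a corner of \(\Phi(x)\).

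This shows that \emph{every} card meeting the footprint covers a corner, including the four cards containing \(x\) (consistent with Figure~\ref{fig:s4-footprint}). The hypothesis that the card is external is therefore not needed, and I would state that in a remark.

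I do not expect a real obstacle. The only point needing care is the one-dimensional claim that a length-2 interval meeting the length-3 interval \(\{i-1,i,i+1\}\) must contain an endpoint \(i\pm1\). This holds because the only point of the length-3 interval that is not an endpoint is \(i\), and any two consecutive integers containing \(i\) also contain \(i-1\) or \(i+1\). Boundary effects do not arise, since \(x\in S_4\) forces \(2\le i,j\le m-1\) and any card lying in the grid is already a valid \(C_{a,b}\).
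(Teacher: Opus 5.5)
Your proof is correct, and it reaches the conclusion by a different route from the paper's. Both arguments start from the same reduction: meeting \(\Phi(x)\) forces the offsets \((a-i,b-j)\) into \(\{-2,-1,0,1\}^2\). The paper then discards the four internal offsets and checks the remaining twelve one by one in a table, recording which corner each covers. You instead prove a one-dimensional fact: any pair \(\{a,a+1\}\) meeting \(\{i-1,i,i+1\}\) contains \(i-1\) or \(i+1\). You then take the product of the row and column witnesses to get a corner. This needs no case-checking. It also proves a stronger statement: every card meeting \(\Phi(x)\), internal or not, covers a corner. So the externality hypothesis really is superfluous for this geometric lemma. It matters only where the lemma is applied in the isolation lemma (Lemma~\ref{lem:s4-isolation}): there an external selected card covering a corner raises that corner's multiplicity to at least two. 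The paper's table does give an explicit list of which corner each external offset hits, but the later arguments never use that finer information, so your factorized argument is the cleaner proof.
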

	
	\begin{proof}
		Write the upper-left corner of the card as \((i+\alpha,j+\beta)\).  In order to
		meet \(\Phi(x)\), the offsets must satisfy
		\[
		\alpha,\beta\in\{-2,-1,0,1\}.
		\]
		The four internal cards containing \(x\) are exactly those with
		\[
		\alpha,\beta\in\{-1,0\}.
		\]
		Thus an external card has one of the twelve offset pairs listed below.  The
		last column gives a footprint corner covered by that card.
		\[
		\begin{array}{c|c}
			(\alpha,\beta) & \text{corner covered}\\
			\hline
			(-2,-2),\,(-2,-1),\,(-1,-2) & (i-1,j-1)\\
			(-2,0),\,(-2,1),\,(-1,1) & (i-1,j+1)\\
			(0,-2),\,(1,-2),\,(1,-1) & (i+1,j-1)\\
			(0,1),\,(1,0),\,(1,1) & (i+1,j+1)
		\end{array}
		\]
		This exhausts all possibilities and proves the claim.
	\end{proof}
	
	\begin{lemma}[Isolation of an \(S_4\)-footprint]
		\label{lem:s4-isolation}
		Let \(x\in S_4\).  No selected card other than the four cards containing \(x\)
		can meet \(\Phi(x)\).
	\end{lemma}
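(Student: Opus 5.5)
The plan is to combine Lemma~\ref{lem:external-footprint-card} with the structure of the private squares of the four internal cards. Let \(x=(i,j)\in S_4\). The four internal cards are \(C_{i-1,j-1},C_{i-1,j},C_{i,j-1},C_{i,j}\).

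First we record the multiplicities they induce on \(\Phi(x)\), as in Figure~\ref{fig:s4-footprint}. The center is covered four times. Each edge-midpoint square \((i\pm1,j)\), \((i,j\pm1)\) is covered by two internal cards. Each corner square of \(\Phi(x)\) is covered by exactly one internal card. Since multiplicities are counted over all selected cards, the center and edge midpoints already have \(\mu\ge2\), whatever else is selected. Hence each internal card can only have a private square among the corners of \(\Phi(x)\): its four squares are one corner, two edge midpoints and the center. Each internal card contains exactly one corner, namely \(C_{i-1,j-1}\ni(i-1,j-1)\), \(C_{i-1,j}\ni(i-1,j+1)\), \(C_{i,j-1}\ni(i+1,j-1)\) and \(C_{i,j}\ni(i+1,j+1)\). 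By irredundancy, all four corners of \(\Phi(x)\) must therefore be private, so \(\mu=1\) at each corner.

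Now suppose some selected card \(C\) meets \(\Phi(x)\) but is not one of the four internal cards. By Lemma~\ref{lem:external-footprint-card}, \(C\) covers some corner \(c\) of \(\Phi(x)\). That corner is also covered by the internal card containing it, so \(\mu(c)\ge2\). This contradicts the previous step, which forced \(\mu(c)=1\). Hence no such card exists, which is the claim.

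The only step needing care is the first: verifying that the sole candidate private square of each internal card is its corner of \(\Phi(x)\), and that each internal card contains exactly one corner. This is a direct check of the \(3\times3\) multiplicity pattern, so I expect no real obstacle. The substantive case analysis has already been done in Lemma~\ref{lem:external-footprint-card}.
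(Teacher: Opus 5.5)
Your proof is correct and takes the same approach as the paper. The $3\times3$ multiplicity pattern leaves each of the four cards containing $x$ only its unique footprint corner as a candidate private square. Lemma~\ref{lem:external-footprint-card} then shows that any other selected card meeting $\Phi(x)$ would cover one of these corners, leaving the corresponding card with no private square.
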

	
	\begin{proof}
		Write \(x=(i,j)\).  The four selected cards surrounding \(x\) cover the
		\(3\times3\) footprint with multiplicities shown in Figure~\ref{fig:s4-footprint}.
		For each of these four cards, all footprint squares except the corresponding
		corner are already covered by another one of the four surrounding cards.  Thus
		the only possible private squares of
		\[
		C_{i-1,j-1},\quad C_{i-1,j},\quad C_{i,j-1},\quad C_{i,j}
		\]
		are respectively
		\[
		(i-1,j-1),\quad (i-1,j+1),\quad (i+1,j-1),\quad (i+1,j+1).
		\]
		
		Suppose another selected card \(Q\) meets \(\Phi(x)\).  By
		Lemma~\ref{lem:external-footprint-card}, \(Q\) covers one of these four corner
		squares.  The surrounding card whose only possible private square is that
		corner would then have no private    square, contradicting irredundance.  Hence no
		such \(Q\) exists.
	\end{proof}
	
	\begin{corollary}[Disjointness of \(S_4\)-footprints]
		\label{cor:s4-footprints-disjoint}
		If \(x,y\in S_4\) and \(x\neq y\), then
		\[
		\Phi(x)\cap\Phi(y)=\varnothing.
		\]
	\end{corollary}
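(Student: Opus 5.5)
The plan is to argue by contradiction using Lemma~\ref{lem:s4-isolation}. Suppose \(x\neq y\) are both in \(S_4\) and some square \(z\) lies in \(\Phi(x)\cap\Phi(y)\). Every square of \(\Phi(y)\) is covered by at least one of the four cards containing \(y\); this is immediate from the multiplicity pattern of Figure~\ref{fig:s4-footprint}. So \(z\) lies in some card \(Q\) containing \(y\), and \(Q\) is selected because \(y\in S_4\). Then \(Q\) meets \(\Phi(x)\), since \(z\in Q\cap\Phi(x)\).

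By Lemma~\ref{lem:s4-isolation} applied to \(x\), \(Q\) must be one of the four cards containing \(x\). Hence \(Q\) contains both \(x\) and \(y\).

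The main step is to rule this out. Since \(Q\) is a \(2\times 2\) card containing \(x\), all four squares of \(Q\) lie in \(\Phi(x)\). In particular \(y\in\Phi(x)\setminus\{x\}\). Now apply Lemma~\ref{lem:s4-isolation} again, this time to \(y\): every card containing \(y\) must be one of the four cards containing \(x\). So all four cards containing \(y\) also contain \(x\), which means the four cards containing \(y\) coincide with the four containing \(x\). The common intersection of the four cards containing \(y\) is exactly \(\{y\}\), and likewise for \(x\), so \(x=y\), a contradiction.

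A more hands-on alternative is to note directly that \(y\) is a neighbour of \(x\) in \(\Phi(x)\). Then some card containing \(y\) sticks out of \(\Phi(x)\), because the cards containing \(y\) cover \(\Phi(y)\neq\Phi(x)\), while still meeting \(\Phi(x)\) at \(y\). By Lemma~\ref{lem:s4-isolation} this card would have to be one of \(x\)'s four cards, which lie inside \(\Phi(x)\); this is the same contradiction.

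The only point needing care is the claim that the four cards containing a point have intersection exactly that point; this is immediate from the coordinates \(C_{i-1,j-1},C_{i-1,j},C_{i,j-1},C_{i,j}\).
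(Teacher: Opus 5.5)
Your proof is correct. It rests on the same fact as the paper's proof: a selected card through \(y\) that meets \(\Phi(x)\) contradicts Lemma~\ref{lem:s4-isolation}. You organize the cases differently, though.

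The paper writes \(y=x+(r,s)\) and splits into two cases. When \(|r|,|s|\le1\), it simply asserts that not all four cards through \(y\) can be cards through \(x\). When \(|r|\) or \(|s|\) equals \(2\), it points, somewhat informally, to a card through \(y\) ``extending toward \(\Phi(x)\)''.

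You avoid coordinates entirely. Any card through \(y\) that meets \(\Phi(x)\) must be a card through \(x\), hence lies inside \(\Phi(x)\). This forces \(y\in\Phi(x)\) and disposes of the far case at once. You then close the near case by noting that the four cards through a square intersect exactly in that square, which is the justification the paper leaves implicit. Your version is more economical and fully rigorous; the paper's is more explicit about which card witnesses the contradiction.

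One small slip: in your third paragraph, the conclusion ``every card containing \(y\) is one of the four cards containing \(x\)'' comes from applying the lemma to \(x\) (using \(y\in\Phi(x)\)), not to \(y\). Applying it to \(y\) instead gives the reverse inclusion. That works equally well, since both sets have exactly four elements.
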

	
	\begin{proof}
		If \(\Phi(x)\) and \(\Phi(y)\) met, then at least one of the four selected cards
		containing \(y\) would meet \(\Phi(x)\). Since \(y\neq x\), those four cards
		cannot all be the four cards containing \(x\). Indeed, writing \(y=x+(r,s)\), the assumption \(\Phi(x)\cap\Phi(y)\neq\varnothing\) implies \(|r|,|s|\le2\), with \((r,s)\neq(0,0)\). If \(|r|,|s|\le1\), then \(y\in\Phi(x)\), so every card containing \(y\) meets \(\Phi(x)\), and at least one of them is not among the four cards containing \(x\). Otherwise, one of \(|r|\) or \(|s|\) equals \(2\); choosing a card containing \(y\) that extends toward \(\Phi(x)\) in that coordinate gives a card that meets \(\Phi(x)\) but cannot be one of the four cards containing \(x\).
		
		Thus, in either case, there exists a selected card containing \(y\) that meets \(\Phi(x)\) but is not one of the four cards containing \(x\). This contradicts Lemma~\ref{lem:s4-isolation}.
	\end{proof}

	\begin{lemma}[Packing bound for \(S_4\)-squares]
		For every irredundant cover of \(G_m\), with \(m\ge3\),
		\[
		s_4\le \left\lceil\frac{m-2}{3}\right\rceil^2.
		\]
	\end{lemma}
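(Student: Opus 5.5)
The plan is a pigeonhole argument built on Corollary~\ref{cor:s4-footprints-disjoint}. First I would locate the squares of \(S_4\). Every \(x=(i,j)\in S_4\) is an interior square, so \(2\le i,j\le m-1\), and \(x\) ranges over the interior \((m-2)\times(m-2\)) sub-grid \(I_m\). By Corollary~\ref{cor:s4-footprints-disjoint}, the \(3\times3\) footprints \(\Phi(x)\) of distinct \(x\in S_4\) are pairwise disjoint. In terms of the centres this means: if \(x\neq y\) lie in \(S_4\) with \(x=(i,j)\) and \(y=(i',j')\), then \(\max(|i-i'|,|j-j'|)\ge 3\).

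Next I partition \(I_m\) into blocks. Each coordinate range \(\{2,\dots,m-1\}\) of length \(m-2\) is split into consecutive intervals of length \(3\), with the last interval possibly shorter. This gives \(\lceil (m-2)/3\rceil\) intervals per coordinate, and their products give \(\lceil (m-2)/3\rceil^2\) blocks, each of size at most \(3\times 3\).

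Within a single block, any two squares differ by at most \(2\) in each coordinate. Two distinct centres in the same block would therefore satisfy \(\max(|i-i'|,|j-j'|)\le2\), contradicting the separation condition above. So each block contains at most one element of \(S_4\). Summing over the blocks gives \(s_4\le\lceil (m-2)/3\rceil^2\).

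The only point needing care is translating footprint disjointness into the coordinate separation. The footprints \(\{i-1,i,i+1\}\times\{j-1,j,j+1\}\) and \(\{i'-1,i',i'+1\}\times\{j'-1,j',j'+1\}\) intersect exactly when \(|i-i'|\le2\) and \(|j-j'|\le2\). Disjointness is therefore equivalent to \(|i-i'|\ge3\) or \(|j-j'|\ge3\), which is the separation condition used. With that, the block argument is immediate.

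The case where \(m-2\) is not divisible by \(3\) causes no trouble, since shorter blocks only strengthen the within-block bound. When \(m=3\), the interior is a single square and the bound reads \(s_4\le1\), which is consistent.
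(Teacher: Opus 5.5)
Your proposal is correct and matches the paper's proof: you partition \(\{2,\dots,m-1\}^2\) into \(\lceil (m-2)/3\rceil^2\) boxes of side at most \(3\), then use Corollary~\ref{cor:s4-footprints-disjoint} to show each box contains at most one \(S_4\)-centre. Your explicit translation of footprint disjointness into the separation condition \(\max(|i-i'|,|j-j'|)\ge 3\) spells out a step the paper states in a single line.
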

	
	\begin{proof}
		The possible centers of \(S_4\)-squares lie in the interior coordinate set
		\[
		\{2,\ldots,m-1\}\times\{2,\ldots,m-1\}.
		\]
		Partition each coordinate direction into consecutive intervals of length at
		most \(3\).  This gives \(\lceil(m-2)/3\rceil\) intervals in each direction and
		hence \(\lceil(m-2)/3\rceil^2\) rectangular boxes.  If two \(S_4\)-centers lay
		in the same box, their \(3\times3\) footprints would intersect.  This is
		impossible by Corollary~\ref{cor:s4-footprints-disjoint}.  Hence each box
		contains at most one \(S_4\)-center.
	\end{proof}
	
	We can now combine the preceding estimates. The boundary-card bound controls \(N_e\), while \(E\) and \(s_{2,\mathrm{int}}\) are nonnegative, and the footprint argument gives an upper bound for \(s_4\). Substituting these estimates into the boundary-cancellation identity yields our first boundary-based general upper bound.
	
	Using
	\[
	N_e\le4\left\lfloor\frac{2m}{3}\right\rfloor-8,
	\qquad
	E\ge0,
	\qquad
	s_{2,\mathrm{int}}\ge0,
	\]
	and
	\[
	s_4\le\left\lceil\frac{m-2}{3}\right\rceil^2
	\]
	in \eqref{eq:cancellation-general}, we obtain
	\[
	6|U|
	\le
	3m^2-12m+32
	+
	4\left(4\left\lfloor\frac{2m}{3}\right\rfloor-8\right)
	+
	\left\lceil\frac{m-2}{3}\right\rceil^2.
	\]
	Simplifying,
	\[
	6|U|
	\le
	3m^2-12m
	+
	16\left\lfloor\frac{2m}{3}\right\rfloor
	+
	\left\lceil\frac{m-2}{3}\right\rceil^2.
	\]
	We have proved the following theorem.
	
	\begin{theorem}[Boundary-cancellation packing bound]
		For every \(m\ge3\),
		\[
		{
			F(m)
			\le
			\left\lfloor
			\frac{
				3m^2-12m
				+
				16\left\lfloor\frac{2m}{3}\right\rfloor
				+
				\left\lceil\frac{m-2}{3}\right\rceil^2
			}{6}
			\right\rfloor.
		}
		\]
		Consequently,
		\[
		{
			F(m)\le \frac{14}{27}m^2-\frac{2}{9}m.
		}
		\]
		
	\end{theorem}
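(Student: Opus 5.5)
The first inequality follows directly from the estimate for \(6|U|\) obtained just before the statement. That estimate substitutes into the boundary-cancellation identity \eqref{eq:cancellation-general} four bounds: the bound on \(N_e\) from the lemma on non-corner boundary cards, the trivial bounds \(E\ge0\) and \(s_{2,\mathrm{int}}\ge0\), and the packing bound for \(s_4\). It holds for every irredundant cover \(U\) of \(G_m\) with \(m\ge3\), in particular for one of maximum size \(F(m)\). Dividing by \(6\) and using that \(|U|\) is an integer, we may take the floor of the right-hand side, which gives the displayed bound on \(F(m)\).

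I would first check that each ingredient is valid for all \(m\ge3\):
\begin{itemize}[label=\(\cdot\)]
\item The corner cards are forced, so \eqref{eq:N-split} and the count \(8+N_e\) of side-card overlap pairs hold.
\item The identity \(\lceil 2(m-1)/3\rceil=\lfloor 2m/3\rfloor\) holds for every residue class of \(m\) modulo \(3\).
\item The packing lemma applies for \(m\ge3\).
\end{itemize}
The constant term also needs care: \(32+4(-8)=0\), which is why no constant survives in the simplified expression.

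For the asymptotic consequence I would use two elementary estimates: \(\lfloor 2m/3\rfloor\le 2m/3\), and \(\lceil (m-2)/3\rceil\le \bigl((m-2)+2\bigr)/3=m/3\), valid since \(\lceil n/3\rceil\le (n+2)/3\) for integers \(n\ge0\). Substituting these gives
\[
6|U|\le 3m^2-12m+\tfrac{32}{3}m+\tfrac{1}{9}m^2=\tfrac{28}{9}m^2-\tfrac{4}{3}m .
\]
Dividing by \(6\) yields \(F(m)\le \tfrac{14}{27}m^2-\tfrac{2}{9}m\), since \(28/54=14/27\) and \(4/18=2/9\). Dropping the outer floor only weakens the bound, so this step is immediate.

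There is no real obstacle here. The only points needing care are the ceiling estimate, which must be applied to the nonnegative integer \(m-2\), and the bookkeeping of the constants \(32\) and \(-32\). I would verify the latter by re-expanding the bound numerically for one small case, for example \(m=3\) or \(m=10\).
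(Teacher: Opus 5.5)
Your proposal is correct and follows the paper's proof. The finite bound comes from substituting the estimates for \(N_e\), \(E\), \(s_{2,\mathrm{int}}\) and \(s_4\) into the boundary-cancellation identity and using integrality of \(|U|\); the asymptotic form comes from \(\lfloor 2m/3\rfloor\le 2m/3\) and \(\lceil (m-2)/3\rceil\le m/3\), which the paper derives from the identity \(\lceil (m-2)/3\rceil=\lfloor m/3\rfloor\) and you derive from \(\lceil n/3\rceil\le (n+2)/3\). One small correction: that ceiling estimate holds for every integer \(n\), so nonnegativity is really needed only to square it, which is fine since \(\lceil (m-2)/3\rceil\ge1\) for \(m\ge3\).
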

	
	\begin{proof}
		The displayed finite bound follows from the preceding derivation and the
		integrality of \(|U|\). To obtain the simpler consequence, first note that
		\[
		\left\lfloor\frac{2m}{3}\right\rfloor
		\le
		\frac{2m}{3}.
		\]
		Moreover, since \(m\) is an integer,
		\[
		\left\lceil\frac{m-2}{3}\right\rceil
		=
		\left\lfloor\frac{m}{3}\right\rfloor
		\le
		\frac{m}{3}.
		\]
		Therefore,
		\[
		\begin{aligned}
			F(m)
			&\le
			\frac{
				3m^2-12m
				+
				16\left(\frac{2m}{3}\right)
				+
				\left(\frac{m}{3}\right)^2
			}{6}
			\\
			&=
			\frac{
				\frac{28}{9}m^2-\frac{4}{3}m
			}{6}
			\\
			&=
			\frac{14}{27}m^2-\frac{2}{9}m.
		\end{aligned}
		\]
		Thus the displayed finite upper bound has leading term
		\(\frac{14}{27}m^2\) together with a negative correction of linear order.
	\end{proof}

	\subsection{A coupled \(S_4\) bound}
	
	The preceding estimate used only the packing consequence of isolated
	\(S_4\)-footprints.  The same footprints also force nearby double-covered
	interior squares.  This lets the term
	\(-s_{2,\mathrm{int}}+s_4\) in the boundary-cancellation identity be treated as
	a single nonpositive contribution.
	
	\begin{lemma}[Each \(S_4\)-square forces interior \(S_2\)-squares]
		For every irredundant cover of \(G_m\), with \(m\ge4\),
		\[
		{s_{2,\mathrm{int}}\ge 2s_4.}
		\]
	\end{lemma}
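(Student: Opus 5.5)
The plan is to use the rigidity of an \(S_4\)-footprint. For each \(x\in S_4\) we exhibit at least two interior squares of multiplicity exactly \(2\) inside \(\Phi(x)\). Then Corollary~\ref{cor:s4-footprints-disjoint} makes these witnesses distinct for distinct \(x\), and summing gives \(s_{2,\mathrm{int}}\ge 2s_4\).

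First, fix \(x=(i,j)\in S_4\), so \(2\le i,j\le m-1\). By Lemma~\ref{lem:s4-isolation}, the only selected cards meeting \(\Phi(x)\) are the four cards containing \(x\). Hence the multiplicities on \(\Phi(x)\) are exactly those of Figure~\ref{fig:s4-footprint}. In particular, the four edge-midpoint squares
\[
(i-1,j),\quad (i+1,j),\quad (i,j-1),\quad (i,j+1)
\]
all have \(\mu=2\) exactly.

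Next, count how many of these four squares can lie on the boundary. Since \(2\le j\le m-1\), column \(j\) is not a boundary column. So \((i-1,j)\) is a boundary square only if \(i-1=1\), and \((i+1,j)\) only if \(i+1=m\). Both cannot happen at once, because that would force \(m=i+1=3\), contradicting \(m\ge4\). Thus at most one of \((i\pm1,j)\) is a boundary square. Symmetrically, at most one of \((i,j\pm1)\) is a boundary square. Hence at least two of the four edge-midpoints are interior squares in \(S_2\).

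Finally, by Corollary~\ref{cor:s4-footprints-disjoint}, the footprints \(\Phi(x)\) for distinct \(x\in S_4\) are pairwise disjoint. So the witnessed interior \(S_2\)-squares are distinct across different \(x\), which gives \(s_{2,\mathrm{int}}\ge 2s_4\).

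The only delicate point is the exactness of the multiplicity \(2\) at the edge-midpoints, which is precisely what the isolation lemma supplies. The hypothesis \(m\ge4\) is used only to rule out a footprint touching two opposite sides.
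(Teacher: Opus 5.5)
Your proof is correct and follows the paper's argument essentially step for step. The isolation lemma gives multiplicity exactly $2$ at the four edge-midpoints, $m\ge4$ ensures at least one square from each opposite pair is interior, and disjointness of footprints makes the witnesses distinct; your explicit check that both $(i\pm1,j)$ lying on the boundary would force $m=3$ is a detail the paper only asserts.
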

	
	\begin{proof}
		Let \(x=(i,j)\in S_4\).  By Lemma~\ref{lem:s4-isolation}, no selected card
		other than the four cards containing \(x\) meets \(\Phi(x)\).  Therefore the four edge-middle
		squares of the footprint,
		\[
		(i-1,j),\qquad (i,j-1),\qquad (i,j+1),\qquad (i+1,j),
		\]
		are covered exactly twice.  Hence they are \(S_2\)-squares.
		
		Since \(x\in S_4\), it is an interior square, so
		\[
		2\le i,j\le m-1.
		\]
		For \(m\ge4\), at least two of the four edge-middle squares above are interior
		grid squares.  Indeed, if one of the two vertical edge-middle squares lies on
		the boundary, then the other vertical edge-middle square is interior; and the
		same is true horizontally.  Thus each \(S_4\)-square contributes at least two
		interior \(S_2\)-squares.
		
		Finally, distinct \(S_4\)-squares have disjoint footprints by
		Corollary~\ref{cor:s4-footprints-disjoint}.  Therefore these contributed
		interior \(S_2\)-squares are distinct for distinct \(S_4\)-squares.  Hence
		$s_{2,\mathrm{int}}\ge 2s_4.$
	\end{proof}
	
	\begin{theorem}[Uniform general upper bound]
		For every \(m\ge4\),
		\[
		{
			F(m)
			\le
			\left\lfloor
			\frac{
				3m^2-12m
				+
				16\left\lfloor\frac{2m}{3}\right\rfloor
			}{6}
			\right\rfloor.
		}
		\]
		Consequently,
		\[
		F(m)\le\frac12m^2-\frac{2m}{9}.
		\]
	\end{theorem}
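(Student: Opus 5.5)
The plan is to start from the boundary-cancellation identity \eqref{eq:cancellation-general},
\[
6|U| = 3m^2-12m+32+4N_e-2E-s_{2,\mathrm{int}}+s_4,
\]
and bound each variable term separately, as in the packing bound. The difference is that the pair \(-s_{2,\mathrm{int}}+s_4\) is now treated as a single contribution rather than handled through the packing estimate. By the preceding lemma, valid for \(m\ge4\), we have \(s_{2,\mathrm{int}}\ge 2s_4\). Since \(s_4\ge0\), this gives
\[
-s_{2,\mathrm{int}}+s_4\le -2s_4+s_4=-s_4\le 0.
\]
We also use \(E\ge0\), and the bound on non-corner boundary cards, \(N_e\le 4\lfloor 2m/3\rfloor-8\), which holds for \(m\ge3\).

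Substituting these three estimates into the identity gives
\[
6|U|\le 3m^2-12m+32+16\left\lfloor\frac{2m}{3}\right\rfloor-32
=3m^2-12m+16\left\lfloor\frac{2m}{3}\right\rfloor .
\]
This holds for every irredundant cover \(U\) of \(G_m\) with \(m\ge4\). Dividing by \(6\) and using that \(|U|\) is an integer yields the floor bound for \(F(m)\), because \(F(m)\) is the maximum of \(|U|\) over irredundant covers.

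For the asymptotic consequence, drop the outer floor and use \(\lfloor 2m/3\rfloor\le 2m/3\):
\[
F(m)\le \frac{3m^2-12m+\frac{32}{3}m}{6}=\frac12 m^2-\frac{4}{18}m=\frac12m^2-\frac{2m}{9}.
\]

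Most of the work has already been done in the earlier lemmas, so no step is a real obstacle. The one point needing care is the hypothesis \(m\ge4\). It is required by the coupling lemma \(s_{2,\mathrm{int}}\ge 2s_4\), which needs at least two edge-middle squares of each footprint to be interior, and it is also enough for the cancellation identity and the boundary-card lemma, which need only \(m\ge3\). It is also worth checking that the \(+32\) from the identity and the \(-32\) coming from \(4(-8)\) cancel exactly, since this is what leaves the clean numerator \(3m^2-12m+16\lfloor 2m/3\rfloor\).
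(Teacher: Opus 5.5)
Your proposal is correct and follows the paper's proof step for step. You start from the boundary-cancellation identity and use \(s_{2,\mathrm{int}}\ge 2s_4\) to get \(-s_{2,\mathrm{int}}+s_4\le -s_4\le 0\), together with \(E\ge 0\) and \(N_e\le 4\lfloor 2m/3\rfloor-8\) (so the \(\pm 32\) cancel), then apply integrality and \(\lfloor 2m/3\rfloor\le 2m/3\). Your point that \(m\ge 4\) is needed for the coupling lemma is well taken: for \(m=3\) the formula would give \(3\), while \(F(3)=4\).
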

	
	\begin{proof}
		Start from the boundary-cancellation identity
		\[
		6|U|
		=
		3m^2-12m+32+4N_e-2E-s_{2,\mathrm{int}}+s_4.
		\]
		By the preceding lemma,
		\[
		s_{2,\mathrm{int}}\ge 2s_4.
		\]
		Therefore
		\[
		-s_{2,\mathrm{int}}+s_4\le -s_4\le0.
		\]
		Since \(E\ge0\), we obtain
		\[
		6|U|
		\le
		3m^2-12m+32+4N_e.
		\]
		Using the boundary-card estimate
		\[
		N_e\le4\left\lfloor\frac{2m}{3}\right\rfloor-8,
		\]
		we get
		\[
		6|U|
		\le
		3m^2-12m+32
		+
		4\left(
		4\left\lfloor\frac{2m}{3}\right\rfloor-8
		\right).
		\]
		The constant terms cancel, giving
		\[
		6|U|
		\le
		3m^2-12m
		+
		16\left\lfloor\frac{2m}{3}\right\rfloor.
		\]
		Dividing by \(6\) and using the integrality of \(|U|\) proves the finite bound.
		
		For the simpler consequence, use
		\[
		\left\lfloor\frac{2m}{3}\right\rfloor
		\le
		\frac{2m}{3}.
		\]
		It follows that
		\[
		\begin{aligned}
			6|U|
			&\le
			3m^2-12m
			+
			16\left(\frac{2m}{3}\right)\\
			&=
			3m^2-\frac{4m}{3}.
		\end{aligned}
		\]
		Dividing by \(6\), we obtain
		\[
		|U|\le\frac12m^2-\frac{2m}{9}.
		\]
		Since this holds for every irredundant cover,
		\[
		F(m)\le\frac12m^2-\frac{2m}{9}.
		\]
	\end{proof}

	\begin{remark}
		This bound improves the leading coefficient from \(\frac{14}{27}\)
		to \(\frac12\), matching the lower bound established in the next
		section.
	\end{remark}
	\section{Lower bounds}
	
	The preceding sections give upper bounds.  We begin with a simple product
	construction and then use a protected period-four staircase to obtain the
	correct leading density and a linear error term.
	
	\begin{theorem}[Parity lower bound]
		For every \(m\ge2\),
		\[
		F(m)\ge \left\lceil\frac m2\right\rceil^2.
		\]
		Consequently,
		\[
		F(m)\ge \frac14m^2,
		\]
		and \(\lceil m/2\rceil^2=\frac14m^2+O(m)\).
	\end{theorem}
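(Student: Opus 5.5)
The plan is to exhibit an explicit irredundant cover of size \(\lceil m/2\rceil^2\), a ``product'' construction obtained by choosing a set \(A\) of row indices and the same set of column indices and selecting all cards \(C_{a,b}\) with \(a,b\in A\). Choose \(A\subseteq\{1,\dots,m-1\}\) as follows. If \(m\) is even, take \(A=\{1,3,5,\dots,m-1\}\). If \(m\) is odd, take \(A=\{1,3,\dots,m-2\}\cup\{m-1\}\). In both cases \(|A|=\lceil m/2\rceil\), so \(|U|=|A|^2=\lceil m/2\rceil^2\).

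First verify the covering condition. A square \((i,j)\) is covered by \(C_{a,b}\) exactly when \(a\in\{i-1,i\}\) and \(b\in\{j-1,j\}\). It therefore suffices that the one-dimensional intervals \(\{a,a+1\}\), \(a\in A\), cover \(\{1,\dots,m\}\). This is immediate: the pairs \(\{1,2\},\{3,4\},\dots\) tile \([1,m]\) for even \(m\), and for odd \(m\) the final pair \(\{m-1,m\}\) covers \(m\).

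Next verify irredundancy by reducing to one dimension. Define the one-dimensional multiplicity \(\nu(i)=|\{a\in A: i\in\{a,a+1\}\}|\). The product structure gives \(\mu(i,j)=\nu(i)\nu(j)\). Hence it suffices to show that every interval \(\{a,a+1\}\) with \(a\in A\) contains some \(i_a\) with \(\nu(i_a)=1\). Then \((i_a,i_b)\) is private for \(C_{a,b}\), since \(\mu=1\cdot1=1\) and the square lies in \(C_{a,b}\).

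For even \(m\) the intervals are disjoint, so every point has \(\nu=1\). For odd \(m\), the only overlap is between \(\{m-2,m-1\}\) and \(\{m-1,m\}\), at the point \(m-1\). The interval \(\{m-2,m-1\}\) keeps the private point \(m-2\) (for \(m\ge3\)), and \(\{m-1,m\}\) keeps \(m\); all earlier intervals are disjoint from everything else. The case \(m=2\) is trivial, since the single card is the whole grid.

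The main point needing care is the odd case, where the last interval shifts. This is exactly what the one-dimensional check above handles. The consequences are then immediate: \(\lceil m/2\rceil^2\ge m^2/4\), and \(\lceil m/2\rceil^2-\tfrac14m^2\le \tfrac{m}{2}+\tfrac14\), which is \(O(m)\).
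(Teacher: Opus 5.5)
Your proposal is correct and follows essentially the same route as the paper. It uses the same product family \(C_{a,b}\) with \(a,b\in A_m\), where \(A_m=\{1,3,\ldots\}\) is supplemented by the terminal index \(m-1\) when \(m\) is odd; covering reduces to covering by the one-dimensional intervals \(\{a,a+1\}\), and privacy comes from pairing one-dimensional private coordinates. Your factorization \(\mu(i,j)=\nu(i)\nu(j)\) only makes explicit a step the paper states directly, and the separate treatment of \(m=2\) is unnecessary since it falls under the even case.
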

	
	\begin{proof}
		Let
		\[
		A_m=
		\begin{cases}
			\{1,3,5,\ldots,m-1\}, & m \text{ even},\\
			\{1,3,5,\ldots,m-2,m-1\}, & m \text{ odd}.
		\end{cases}
		\]
		Then \(|A_m|=\lceil m/2\rceil\).  Select all cards \(C_{a,b}\) with
		\(a,b\in A_m\).
		
		The one-dimensional intervals \(\{a,a+1\}\), with \(a\in A_m\), cover
		\(\{1,\ldots,m\}\).  Hence the selected product cards cover \(G_m\).  Moreover,
		each interval \(\{a,a+1\}\) has a coordinate covered by no other interval in
		this family: for odd \(a<m-1\) one may use \(a\), while for the extra terminal
		choice \(a=m-1\) in the odd case one may use \(m\).  If \(t(a)\) denotes such a
		private coordinate for the row interval and \(t(b)\) one for the column
		interval, then the square \((t(a),t(b))\) is covered only by \(C_{a,b}\).  Thus
		every selected card has a private square.
	\end{proof}
	
	\begin{lemma}[The period-four staircase pattern]
		\label{lem:infinite-staircase}
		On the infinite square grid, select the \(2\times2\) cards with upper-left
		corners \((a,b)\in\mathbb Z^2\) satisfying
		\[
		b-a\equiv0 \text{ or }3\pmod4.
		\]
		Then every grid square is covered either once or three times.  Moreover, every
		selected card has a private square in this infinite periodic configuration.
	\end{lemma}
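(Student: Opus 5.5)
The proof is a direct residue computation. The key observation is that both the multiplicity of a square and the selection rule depend only on a diagonal index modulo \(4\). I will first compute \(\mu\) for an arbitrary square of the infinite grid, and then exhibit an explicit private square for each selected card.

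\emph{Step 1 (multiplicities).} Fix a square \(x=(i,j)\in\mathbb Z^2\) and put \(d=j-i\). On the infinite grid, exactly four cards contain \(x\), with upper-left corners \((a,b)\in\{i-1,i\}\times\{j-1,j\}\). Their values of \(b-a\) form the multiset
\[
\{d,\ d+1,\ d-1,\ d\},
\]
coming respectively from \((i-1,j-1)\), \((i-1,j)\), \((i,j-1)\) and \((i,j)\). So \(\mu(x)\) is the number of entries of this multiset that are congruent to \(0\) or \(3\) modulo \(4\), and it depends only on \(d \bmod 4\).
\begin{itemize}[label={}]
\item If \(d\equiv0\), the selected entries are \(d-1\equiv3\) and both copies of \(d\); the entry \(d+1\equiv1\) is not selected. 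Hence \(\mu(x)=3\).
\item If \(d\equiv1\), only \(d-1\equiv0\) is selected. Hence \(\mu(x)=1\).
\item If \(d\equiv2\), only \(d+1\equiv3\) is selected. Hence \(\mu(x)=1\).
\item If \(d\equiv3\), the selected entries are both copies of \(d\) and \(d+1\equiv0\). Hence \(\mu(x)=3\).
\end{itemize}
So every square is covered once or three times. More precisely, \(\mu(i,j)=1\) exactly when \(j-i\equiv1\) or \(2 \pmod 4\).

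\emph{Step 2 (private squares).} Let \(C_{a,b}\) be a selected card and set \(e=b-a\).
\begin{itemize}[label={}]
\item If \(e\equiv0\), take the square \((a,b+1)\in C_{a,b}\). Its diagonal index is \(e+1\equiv1\), so by Step 1 it has multiplicity \(1\).
\item If \(e\equiv3\), take the square \((a+1,b)\in C_{a,b}\). Its diagonal index is \(e-1\equiv2\), so by Step 1 it again has multiplicity \(1\).
\end{itemize}
A square of multiplicity \(1\) lying in \(C_{a,b}\) is covered by \(C_{a,b}\) alone, so it is a private square of that card.

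The only step requiring care is the bookkeeping in Step 1: matching each of the four corners to its value of \(b-a\), and checking the four residue classes without a sign slip. Everything else follows immediately from that table.
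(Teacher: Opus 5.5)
Your proof is correct and matches the paper's argument: the same residue multiset $d,d+1,d-1,d$ for the four cards containing $(i,j)$, the same multiplicity table by residue class, and the same private squares $(a,b+1)$ and $(a+1,b)$. The only difference is in how privacy is certified. You read it off from the Step 1 table via the diagonal index ($e+1\equiv1$ and $e-1\equiv2$), whereas the paper re-lists the four residue differences. Your version is slightly more economical.
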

	
	\begin{proof}
		Consider the square \(x=(i,j)\).  The four possible cards containing \(x\) have
		upper-left corners
		\[
		(i-1,j-1),\quad(i-1,j),\quad(i,j-1),\quad(i,j).
		\]
		If \(d=j-i\), then the corresponding differences \(b-a\pmod4\) are
		\[
		d,\quad d+1,\quad d-1,\quad d.
		\]
		For \(d\equiv0,1,2,3\pmod4\), respectively, the numbers of selected cards are
		\(3,1,1,3\).  Hence every square is covered, with multiplicity either \(1\) or
		\(3\).
		
		If \(C_{a,b}\) is selected and \(b-a\equiv0\pmod4\), then \((a,b+1)\) is
		private for \(C_{a,b}\).  Indeed, the four card positions that could cover it
		have residue differences
		\[
		1,\quad2,\quad0,\quad1,
		\]
		so only \(C_{a,b}\) is selected.  If \(b-a\equiv3\pmod4\), then \((a+1,b)\)
		is private: the four relevant residue differences are
		\[
		2,\quad3,\quad1,\quad2,
		\]
		and again \(C_{a,b}\) is the unique selected card covering the displayed
		square.
	\end{proof}
	
	\begin{theorem}[Protected period-four lower bound]
		\label{thm:staircase-lower-bound}
		For every \(m\ge6\),
		\[
		F(m)\ge\frac12m^2-2m+3.
		\]
		Consequently, for every \(m\ge2\),
		\[
		F(m)\ge\frac12m^2-2m,
		\]
		and therefore
		\[
		\liminf_{m\to\infty}\frac{F(m)}{m^2}\ge\frac12.
		\]
	\end{theorem}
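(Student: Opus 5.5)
Restrict the period-four staircase of Lemma~\ref{lem:infinite-staircase} to the cards lying inside \(G_m\), then repair a boundary strip of width \(O(1)\) so that the configuration covers every square and every card keeps a private square. The loss must be at most \(2m-3\) cards relative to \(\frac12m^2\).

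\textbf{Step 1: count the interior cards.} Among the \((m-1)^2\) positions \(C_{a,b}\), \(1\le a,b\le m-1\), exactly half of the residues of \(b-a\) modulo \(4\) are selected. By counting along the diagonals \(b-a=d\), the restricted staircase therefore contains \(\frac12(m-1)^2+O(1)\) cards. Since \(\frac12(m-1)^2=\frac12m^2-m+\frac12\), the unrepaired staircase already has roughly \(\frac12m^2-m\) cards. This leaves a budget of about \(m\) cards to spend on repair.

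\textbf{Step 2: locate the defects.} Truncation causes two kinds of defect.
\begin{enumerate}[label=(\alph*)]
\item A boundary square may become uncovered. A square \((i,j)\) with \(j-i\equiv1,2\pmod4\) is covered once in the infinite pattern, and its unique card may stick out of the grid.
\item A selected card near the boundary may lose its private square \((a,b+1)\) or \((a+1,b)\). This cannot happen, since that square lies inside the card, hence inside \(G_m\). Its multiplicity can only drop, so privacy is preserved.
\end{enumerate}
Hence truncation never destroys irredundancy, and the only problem is coverage. Along the top row \(i=1\), uncovered squares occur with period \(4\) in \(j\), in pairs of consecutive columns. Equivalently, consider the staircase line-pattern restricted to the side positions \(C_{1,b}\). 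A selected position occurs when \(b-1\equiv0,3\pmod4\), that is, in pairs, so two of every four boundary columns are uncovered. The same happens on the other three sides.

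\textbf{Step 3: repair.} For each uncovered boundary pair, add the side card \(C_{1,b}\) covering it. Then verify that this card has a private square (one of the two uncovered squares), and that it does not destroy the private squares of its neighbours. The dangerous neighbours are the staircase cards whose private square lies in row \(1\) or row \(2\) and is also covered by the new card. Whenever such a conflict arises, I would instead delete or shift a staircase card near the boundary, choosing within each period-\(4\) block along a side a fixed local pattern. This is the ``protected'' modification, which should use a fixed \(4\times3\) or so gadget.

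Checking privacy for each residue class is a finite verification of \(4\) cases per side. The four corners need a separate treatment, both because of the forced corner cards and because \(m\) modulo \(4\) affects the alignment. The net count change per side should be about \(-m/4\) relative to the infinite pattern restricted to the grid. Summing over the four sides, plus \(O(1)\) corner corrections, should give
\[
|U|\ge\tfrac12m^2-2m+3\qquad\text{for } m\ge6.
\]

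\textbf{Consequences.} The weaker bound \(F(m)\ge\frac12m^2-2m\) for \(2\le m\le5\) follows from the parity lower bound \(\lceil m/2\rceil^2\), which is at least \(\frac12m^2-2m\) for small \(m\); this can be checked directly. The \(\liminf\) statement is then immediate.

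\textbf{Main obstacle.} The hardest part is designing the boundary gadget so that the repair cards do not steal private squares from staircase cards, and then getting the exact constant \(3\) uniformly over the residues of \(m\) modulo \(4\) and at the corners. My plan there is to handle the four residue classes of \(m\) separately, adjusting by a few cards at each corner.
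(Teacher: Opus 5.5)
Your proposal has a genuine gap. The step carrying the whole content of the theorem is only promised: Step 3 never specifies the repair. The per-side cost of about $-m/4$ and the corner corrections that are supposed to produce the constant $3$ are asserted with ``should''.

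Your defect analysis is also off. On the top side the selected positions are $C_{1,b}$ with $b\equiv 0,1\pmod 4$, which cover the columns $\equiv 0,1,2\pmod 4$. So the truncated staircase leaves single squares $(1,j)$ with $j\equiv 3\pmod 4$ uncovered, not pairs, and similarly on the other sides.

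More seriously, away from the corners no one-card fix works for such a $j$:
\begin{itemize}
\item Adding $C_{1,j-1}$ gives the four squares of $C_{1,j-2}$ multiplicities $2,2,3,4$, so $C_{1,j-2}$ loses all its private squares.
\item Adding $C_{1,j}$ covers $(2,j+1)$, the only private square of $C_{1,j+1}$.
\item Shifting $C_{1,j+1}$ to $C_{1,j}$ covers $(2,j)$, the only private square of the second-row card $C_{2,j-1}$.
\end{itemize}
So an in-place repair needs a multi-card gadget that also disturbs the second row of staircase cards. The bound $\frac12m^2-2m+3$ depends on constructing this gadget, checking irredundancy for every residue of $m$ modulo $4$ and at the four corners, and counting it exactly. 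None of this is done.

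The missing idea is to separate the two families spatially instead of repairing in place. The paper keeps only those staircase cards whose designated private square from the staircase lemma lies in $Q=\{3,\dots,m-2\}^2$. These cards lie inside $\{2,\dots,m-1\}^2$ and still cover $Q$. For residues $0$ and $3$ one checks that one of the three covering cards has its private square in $Q$; at one corner of $Q$ this is the middle card, which uses $m\ge 6$.

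The frame $G_m\setminus Q$ is then covered by parity covers of four disjoint rectangles of width $2$, with the private squares taken on the outer boundary. Repair cards never meet $Q$ and staircase cards never meet the outer boundary, so irredundancy holds with no case analysis. The count is $|\mathcal P|\ge\frac{(m-4)^2}{2}-1$ and $|\mathcal R|=2\lceil m/2\rceil+2\lceil (m-4)/2\rceil\ge 2m-4$, giving exactly $\frac12m^2-2m+3$. The $-2m$ is simply the price of covering the frame, of area $8m-16$, at density $\frac14$ instead of $\frac12$.

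Your treatment of $2\le m\le 5$ via the parity bound, and the $\liminf$ consequence, are fine.
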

	
	\begin{proof}
		Assume first that \(m\ge6\), and put
		\[
		Q=\{3,\ldots,m-2\}\times\{3,\ldots,m-2\}.
		\]
		Retain exactly those staircase cards whose explicit private square from
		Lemma~\ref{lem:infinite-staircase} lies in \(Q\).  Equivalently, define
		\[
		\begin{aligned}
			\mathcal P
			={}&\{C_{i,j-1}:(i,j)\in Q,\ j-i\equiv1\pmod4\}\\
			&\mathbin{\cup}
			\{C_{i-1,j}:(i,j)\in Q,\ j-i\equiv2\pmod4\}.
		\end{aligned}
		\]
		All these cards lie on the board; in fact, each of their squares lies in
		\(\{2,\ldots,m-1\}^2\).  For the card associated with \((i,j)\), choose
		\((i,j)\) as its private square.  Lemma~\ref{lem:infinite-staircase} shows that
		these chosen squares are private within \(\mathcal P\).
		
		We claim that \(\mathcal P\) covers \(Q\).  Let \(x=(i,j)\in Q\), and write
		\(d=j-i\pmod4\).  If \(d=1\) or \(d=2\), then \(x\) itself is one of the
		chosen private squares and is therefore covered.  If \(d=0\), the three
		selected staircase cards containing \(x\) have chosen private squares
		\[
		(i-1,j),\qquad(i+1,j-1),\qquad(i,j+1).
		\]
		At least one lies in \(Q\).  Indeed, unless \(x\) is on both the top and right
		sides of \(Q\), one of the first and third squares lies in \(Q\); at the
		top-right corner, the middle square lies in \(Q\).  Thus at least one of the
		three cards belongs to \(\mathcal P\).
		
		Similarly, if \(d=3\), the relevant private squares are
		\[
		(i,j-1),\qquad(i-1,j+1),\qquad(i+1,j).
		\]
		At least one lies in \(Q\); at the only exceptional position, the bottom-left
		corner, the middle square lies in \(Q\).  Hence \(\mathcal P\) covers all of
		\(Q\).
		
		The complement of \(Q\) is the disjoint union of the four rectangles
		\[
		\begin{aligned}
			T&=\{1,2\}\times\{1,\ldots,m\},\\
			B&=\{m-1,m\}\times\{1,\ldots,m\},\\
			L&=\{3,\ldots,m-2\}\times\{1,2\},\\
			R&=\{3,\ldots,m-2\}\times\{m-1,m\}.
		\end{aligned}
		\]
		Each rectangle has both side lengths at least \(2\).  Cover each coordinate
		interval by length-\(2\) intervals starting at every other position, adding a
		terminal interval when its length is odd, and take Cartesian products.  Let
		\(\mathcal R\) be the union of the resulting four rectangular parity covers.
		Then \(\mathcal R\) covers \(G_m\setminus Q\), and every repair card is
		contained in \(G_m\setminus Q\).  Consequently, no repair card covers a chosen
		private square of \(\mathcal P\).
		
		Every repair card also has a private square in the combined family.  For a card
		in \(T\), use row \(1\) together with the private coordinate of its
		one-dimensional column interval.  For cards in \(B,L,R\), use respectively row
		\(m\), column \(1\), and column \(m\), together with the corresponding private
		interval coordinate.  These squares are private within their rectangular
		parity covers.  The four rectangles are disjoint, and no card of
		\(\mathcal P\) reaches the outer boundary.  Hence
		\[
		\mathcal U=\mathcal P\cup\mathcal R
		\]
		is an irredundant cover of \(G_m\).
		
		Figure~\ref{fig:protected-period-four} illustrates the construction
		for \(m=8\).
		
		\begin{figure}[htbp]
			\centering
			
			\includegraphics[width=0.78\linewidth]{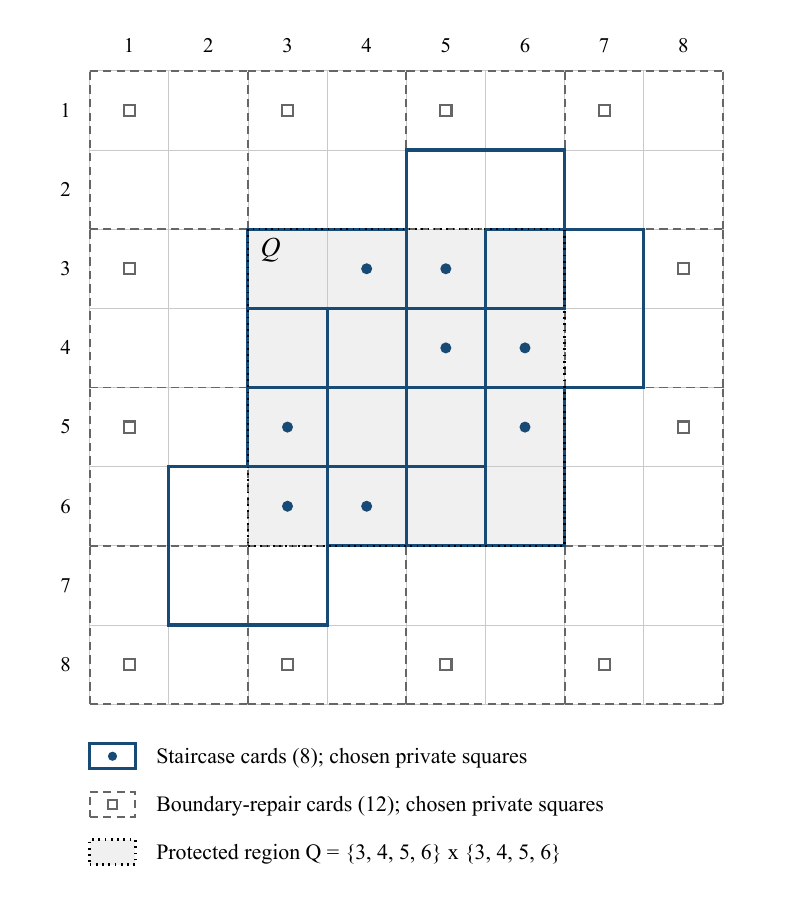}
			\caption{The protected period-four construction for \(m=8\),
				using eight staircase cards and twelve boundary-repair cards.
				The marked private squares lie in \(Q\) and on the outer
				boundary, respectively.}
			\label{fig:protected-period-four}
		\end{figure}
		
		It remains to count cards.  Put \(n=m-4\), the side length of \(Q\).  The map
		from \(\mathcal P\) to its chosen private squares is a bijection onto
		\[
		\{(i,j)\in Q:j-i\equiv1\text{ or }2\pmod4\}.
		\]
		For \(t\in\{0,1,2,3\}\), let \(c_t\) denote the number of integers in \(\{1,\ldots,n\}\) congruent to \(t\pmod 4\). The number of pairs \((i,j)\) satisfying \(j-i\equiv1\) or \(2\pmod4\) is
		
		$$
		\sum_{t=0}^{3} c_t\bigl(c_{t+1}+c_{t+2}\bigr),
		$$
		
		where the subscripts are taken modulo \(4\). Writing \(n=4q+r\) and substituting the corresponding residue-class sizes gives
		
		\[
		|\mathcal P|
		=
		\begin{cases}
			n^2/2, & n\equiv0\pmod4,\\
			(n^2-1)/2, & n\equiv1\text{ or }3\pmod4,\\
			n^2/2-1, & n\equiv2\pmod4.
		\end{cases}
		\]
		Therefore
		\[
		|\mathcal P|
		\ge\frac{(m-4)^2}{2}-1
		=\frac12m^2-4m+7.
		\]
		
		The top and bottom rectangles each use \(\lceil m/2\rceil\) cards, while the
		left and right rectangles each use \(\lceil(m-4)/2\rceil\) cards.  Thus
		\[
		|\mathcal R|
		=2\left\lceil\frac m2\right\rceil
		+2\left\lceil\frac{m-4}{2}\right\rceil
		\ge2m-4.
		\]
		The two families are disjoint, so
		\[
		F(m)
		\ge|\mathcal U|
		=|\mathcal P|+|\mathcal R|
		\ge\frac12m^2-2m+3.
		\]
		
		For \(2\le m\le5\), the parity lower bound verifies
		\(F(m)\ge\frac12m^2-2m\) directly.
	\end{proof}

	Together with the uniform upper bound, Theorem~\ref{thm:staircase-lower-bound}
	gives, for \(m\ge6\),
	\[
	\frac{2m}{9}
	\le
	\frac12m^2-F(m)
	\le
	2m-3.
	\]
	Consequently,
	\[
	{F(m)=\frac12m^2-\Theta(m)}
	\]
	and
	\[
	\lim_{m\to\infty}\frac{F(m)}{m^2}=\frac12.
	\]

	\subsection*{Main open problem}
	
	The preceding bounds show that the deficit
	\[
	\frac12m^2-F(m)
	\]
	has linear order.  The remaining problem is to determine its finer asymptotic
	behaviour.  In particular, decide whether there is a constant \(c>0\) such
	that
	\[
	F(m)=\frac12m^2-cm+O(1),
	\]
	or, more generally, whether there are constants \(c_0,c_1,c_2,c_3>0\) such
	that
	\[
	F(m)=\frac12m^2-c_r m+O(1)
	\qquad(m\equiv r\pmod4).
	\]
	The period-four staircase and boundary-repair constructions suggest that the
	linear term may contain residue-class effects modulo \(4\).
	
	For the \(10\times10\) benchmark, the defect identity shows that the upper
	bound \(|U|\le39\) would follow from proving \(D_U>0\).  The boundary-cancellation
	identity specializes to
	\[
	6|U|=212+4N_e-2E-s_{2,\mathrm{int}}+s_4.
	\]
	Thus a non-computational proof of the benchmark upper bound would follow from
	a finite coupling inequality strong enough to force
	\[
	2E+s_{2,\mathrm{int}}-s_4\ge4N_e-22.
	\]
	This indicates the boundary-interior interaction that a sharper general theory
	would need to capture in order to determine the coefficient of the linear
	deficit.
	
	\appendix
	\section{An auxiliary mex-level bound}
	\label{app:mex}

	\begin{definition}
		For a finite set \(A\subseteq\mathbb Z_{>0}\), the smallest integer in $\mathbb{Z}_{>0} \setminus A$ is denoted by $\mex_+(A),$ that is,
		\[
		\mex_+(A)=\min\{k\in\mathbb Z_{>0}:k\notin A\}.
		\]
	\end{definition}
	
	
	We now introduce a height-based decomposition of the card set. For a given irredundant cover \( U=\{C^1,C^2\ldots,C^{|U|}\}\), we fix an arbitrary ordering of its cards. Since the ordering is arbitrary, any selected card can be placed first and assigned level \(1\). Thus, we select a card \(C^1\) from \(U\) and assign \(\ell(C^1)=1\), meaning that its level is \(1\). For each remaining card \(C^k\), where \(k\geq2\), we determine its level recursively by
	\[
	\ell(C^k)
	=
	mex_+
	\{\ell(C^i):1\le i<k,\ C^i\cap C^k\neq\varnothing\}.
	\]
	
	For each \(i\ge1\), we define the level class
	\[
	L_i=\{C^k\in U:\ell(C^k)=i\}.
	\]

	The maximum level of the decomposition can be written as $
	h=\displaystyle \max_{C^k\in U}\ell(C^k).$
	Note that this decomposition depends on the assigned ordering. This dynamic partitioning will be used to obtain the first bound in Section \ref{section_6}.
	
	In this section, we develop our first general upper bound for \(F(m)\) by introducing an auxiliary decomposition of the selected cards. We first assign levels to the cards using the positive mex operation and study the resulting structure. In particular, we establish a local bound on the number of cards that can overlap a given selected card and use it to show that the decomposition has height at most five. We then use this five-level structure to obtain a first global counting bound on the size of an irredundant cover. Although the resulting estimate is weaker than the bounds derived later from square multiplicities and boundary effects, it provides a simple first illustration of how the irredundancy condition imposes global restrictions on the number of selected cards.
	
	\subsection{Card levels and the height bound}
	
	For \(C^k\in U\), let
	\[
	\mathcal N(C^k)=\{C^l\in U\setminus\{C^k\}: C^k\cap C^l\neq\varnothing\}.
	\]
	Equivalently, $\mathcal N(C^k)$ is the set of selected cards, other than $C^k$, that share at least one square with $C^k$.

	\begin{lemma}[Local neighborhood bound]
		For every selected card \(C^k\in U\), $|\mathcal N(C^k)|\le 4.$
		\label{Local neighborhood bound}
	\end{lemma}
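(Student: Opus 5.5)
The plan is to work entirely locally around a fixed selected card \(C^k=C_{a,b}\), using only the private-square condition for \(C^k\) and for one neighbouring card. First I will list the candidates for \(\mathcal N(C^k)\). A card \(C_{a+\alpha,b+\beta}\) meets \(C_{a,b}\) if and only if \(\alpha,\beta\in\{-1,0,1\}\). Hence \(\mathcal N(C^k)\) is contained in the set of at most eight cards with offsets \((\alpha,\beta)\in\{-1,0,1\}^2\setminus\{(0,0)\}\). Offsets that fall off the board correspond to unselected positions, so boundary effects only help.

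Next I use that \(C^k\) has a private square. Each of the four squares of \(C_{a,b}\) is covered exactly by the cards whose offsets form a \(2\times2\) block of offsets containing \((0,0)\). For example, \((a,b)\) is covered by the offsets \((-1,-1),(-1,0),(0,-1),(0,0)\). By the rotational and reflectional symmetry of the grid, I may assume that the private square of \(C^k\) is \((a,b)\). Then the three neighbours with offsets \((-1,-1),(-1,0),(0,-1)\) are not selected. This leaves at most five candidates:
\[
(-1,1),\ (0,1),\ (1,1),\ (1,0),\ (1,-1).
\]

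The key step is to show that these five cannot all be selected. Suppose they are, and look at \(C_{a,b+1}\), the card with offset \((0,1)\). Its squares and the other selected cards covering them are as follows.
\begin{itemize}
\item \((a,b+1)\) and \((a+1,b+1)\) lie in \(C_{a,b}\) itself.
\item \((a,b+2)\) lies in \(C_{a-1,b+1}\), the offset \((-1,1)\) card.
\item \((a+1,b+2)\) lies in \(C_{a+1,b+1}\), the offset \((1,1)\) card.
\end{itemize}
So \(C_{a,b+1}\) has no private square, which contradicts irredundance. Therefore at most four of the five candidates are selected, and \(|\mathcal N(C^k)|\le4\).

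The only step needing care is this last one: choosing which neighbour is forced to be redundant and checking all four of its squares. I expect the check to be short, but it must be stated for the normalized case and then transported by symmetry to the other three choices of private corner. It also remains valid on the boundary, since there the relevant cards are still on the board.
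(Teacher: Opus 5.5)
Your proposal is correct and is essentially the paper's proof: you use symmetry to place the private square of \(C_{a,b}\) at \((a,b)\), exclude the three offsets covering it, and show that if the remaining five positions were all selected then \(C_{a,b+1}\) would have no private square. Your check of its four squares (left column in \(C_{a,b}\), right column in \(C_{a-1,b+1}\) and \(C_{a+1,b+1}\)) matches the paper's.
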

	
	\begin{proof}
		Let \(C^k=C_{a,b}\). By symmetry, assume that its upper-left square
		\((a,b)\) is private. We may work in the infinite grid, since the boundary can
		only remove possible neighbours. Every card meeting \(C^k\) has the form $C_{a+\alpha,b+\beta},$ where $(\alpha,\beta)\neq (0,0).$
		The privacy of \((a,b)\) excludes the three offsets, where $(\alpha,\beta)$ is equal to one of the pairs $(-1,-1), (-1,0), (0,-1),$ so at most five neighbouring positions remain. These five positions cannot all be selected. Indeed, if they were, then
		\(C_{a,b+1}\) would have no private square: its left column is covered by
		\(C_{a,b}\), while its upper-right and lower-right squares are covered by
		\(C_{a-1,b+1}\) and \(C_{a+1,b+1}\), respectively. This contradicts
		irredundance. Hence at least one of the five positions is absent, and therefore
		\[
		|\mathcal N(C^k)|\le 4.
		\]
	\end{proof}

	\begin{theorem}(Height)
		For every irredundant cover \(U\), the height \(h\) of the mex-defined level decomposition satisfies \(h\le5\).
	\end{theorem}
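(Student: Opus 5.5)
The plan is to follow the definition of the level directly, as in a greedy colouring bound. For each card \(C^k\) with \(k\ge2\), the set whose positive mex defines \(\ell(C^k)\) is
\[
A_k=\{\ell(C^i):1\le i<k,\ C^i\cap C^k\neq\varnothing\}.
\]
Each element of \(A_k\) comes from some earlier card \(C^i\) that meets \(C^k\), and such a card lies in \(\mathcal N(C^k)\). Hence \(|A_k|\le|\{C^i\in\mathcal N(C^k):i<k\}|\le|\mathcal N(C^k)|\).

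The next step is the elementary fact that for any finite set \(A\subseteq\mathbb Z_{>0}\) we have \(\mex_+(A)\le|A|+1\). The reason is that the \(|A|+1\) integers \(1,\dots,|A|+1\) cannot all lie in \(A\), so the smallest missing positive integer is at most \(|A|+1\).

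Combining these two observations with Lemma~\ref{Local neighborhood bound}, which gives \(|\mathcal N(C^k)|\le4\), we get
\[
\ell(C^k)=\mex_+(A_k)\le |A_k|+1\le 5
\]
for every \(k\ge2\). The first card has \(\ell(C^1)=1\). Taking the maximum over all cards gives \(h\le5\), and this holds for every ordering.

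None of these steps is a real obstacle. All the geometric content sits in the local neighborhood bound, which is assumed from the paper. The one point to check is that the same earlier card is never counted twice and that the count \(|A_k|\) includes only earlier neighbours, not all neighbours. This only helps, since \(|A_k|\) can only shrink. With that checked, the argument is a routine greedy-colouring estimate.
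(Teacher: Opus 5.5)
Your proof is correct and is essentially the paper's argument. The paper phrases it as ``a card at level $i$ must meet earlier cards of each level $1,\dots,i-1$, so $|\mathcal N(C^k)|\ge i-1$,'' which is the contrapositive of your estimate $\mex_+(A_k)\le|A_k|+1\le|\mathcal N(C^k)|+1\le 5$, and both rest on the same local neighborhood bound $|\mathcal N(C^k)|\le 4$.
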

	
	\begin{proof}
		By construction, any two distinct cards which overlap each other are in different levels. Indeed, if two overlapping cards were assigned the same level, then the card appearing later in the chosen ordering would see that level among its earlier overlapping neighbours, so the positive mex rule could not assign it the same level. 
		
		Suppose a card \(C^k\) is on level \(i\), that is, \(\ell(C^k)=i\). By the definition of \(\mex_+\), the card \(C^k\) must overlap at least one earlier card of each level $1,2,\ldots,i-1.$ Thus $|\mathcal N(C^k)|\ge i-1.$ By Lemma \ref{Local neighborhood bound}, $|\mathcal N(C^k)|\le4,$
		therefore the level $i$ must be less than or equal to $5.$ This holds for every selected card \(C^k\) and we obtain that the maximum level $h$ must be less than or equal to $5.$
	\end{proof}
	
	\subsection{A first general layer-counting bound}
	
	\label{section_6}
	We now prove the first direct consequence of the height theorem. This bound is weaker than the later boundary bounds, but it is conceptually important because it is the cleanest place where the five-level structure gives a global estimate.
	
	\begin{theorem}
		For every \(m\ge2\),
		\[
		{
			F(m)\le \left\lfloor \frac{5m^2}{8}\right\rfloor.
		}
		\]
		In particular,
		\[
		F(10)\le62.
		\]
	\end{theorem}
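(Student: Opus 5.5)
The plan is to combine the height theorem with a private-square count inside each level class. By the height theorem, $U=L_1\cup\cdots\cup L_h$ with $h\le5$. The first observation in the proof of that theorem says that overlapping cards receive different levels. Hence each $L_i$ consists of pairwise disjoint cards, so
\[
4|L_i|\le m^2 \quad\text{for each } i .
\]
This alone gives only $|U|\le 5m^2/4$, so a second constraint is needed. That constraint must use private squares, and it is where the factor $1/2$ (giving $5/8$) has to come from.

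\textbf{Step 1: a per-card weight.} Every card $C$ has at least one private square. By Lemma~\ref{Local neighborhood bound}, $C$ overlaps at most four other selected cards. I would assign to each card a weight $w(C)$ equal to the amount of area it ``owns'': its private squares count fully, and a square of multiplicity $j\ge2$ contributes $1/j$. Summing over all cards gives
\[
\sum_{C\in U} w(C)=m^2 .
\]
So it suffices to show that the average weight is at least $8/5$. In fact the cleaner target is
\[
\sum_{C\in L_i} w(C)\ge \frac{8}{5}|L_i|
\quad\text{summed over levels, or directly}\quad
\sum_C w(C)\ge \frac{8}{5}|U| .
\]

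\textbf{Step 2: a local bound on $w(C)$.} A card has one private square, contributing $1$. Each of its other three squares has multiplicity at most $4$, contributing at least $1/4$ each. This yields only $w(C)\ge 7/4$ when every square is covered four times, which is already better than $8/5$. The difficulty is that $w(C)\ge 1+3\cdot\frac14=\frac74\ge\frac85$ holds trivially, giving
\[
|U|\le \frac{4m^2}{7},
\]
which is stronger than $5m^2/8$. So the stated bound follows directly from the weight argument; the level decomposition merely motivates it. To stay faithful to the section's framing, I would present the argument as follows. For each level $i$, the cards of $L_i$ are disjoint, and each has a private square not shared with any card. Then sum the fractional-ownership bound over the at most five levels, concluding
\[
|U|\le \frac{5m^2}{8}
\]
after taking floors by integrality.

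\textbf{Main obstacle.} The step needing care is the exact bookkeeping that $\sum_C w(C)=m^2$. Every square is covered and hence distributed fully among its covering cards, and every card has some square of multiplicity $1$. The multiplicity bound $\mu\le4$ must also be justified, which follows from the geometry recorded in Section~\ref{sec:defect-identity}. The specialization $F(10)\le62$ is then immediate, since $\lfloor 500/8\rfloor=62$.
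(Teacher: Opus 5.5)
Your argument is correct, but it takes a different and stronger route than the paper, and the level decomposition plays no role in it.

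\textbf{The paper's route.} It uses the height bound $h\le5$, which rests on Lemma~\ref{Local neighborhood bound}, to find a class $L_1$ of pairwise disjoint cards with $n_1\ge|U|/5$. The $4n_1$ squares covered by $L_1$ cannot be private squares of any other card, so $|U|-n_1\le m^2-4n_1$, i.e.\ $|U|\le m^2-\tfrac35|U|$. That is where $\tfrac58$ comes from.

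\textbf{Your route.} Your fractional-ownership count needs only $\mu\le4$ and one private square per card: $m^2=\sum_{C\in U}w(C)\ge\tfrac74|U|$. Hence $|U|\le\tfrac47m^2<\tfrac58m^2$, which even gives $F(10)\le57$. This is the same as combining \eqref{eq:grid-area} and \eqref{eq:incidence-general} with $s_1\ge|U|$: $4|U|\le s_1+4(m^2-s_1)\le4m^2-3|U|$.

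It also sharpens easily. Let $p$ be a private square of $C$, and let $q$ be a square of $C$ sharing an edge with $p$. The card position other than $C$ that contains both $p$ and $q$ would cover $p$, so it is unselected, and $\mu(q)\le3$. If the square of $C$ diagonal to $p$ lies in $S_4$, Lemma~\ref{lem:s4-isolation} forces both such $q$ to have multiplicity exactly $2$. Either way $w(C)\ge2$, so $|U|\le\tfrac12m^2$, which recovers the leading term of the paper's uniform bound. The paper's version mainly serves to illustrate the five-level structure; as a bound it is superseded.

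\textbf{Fix the write-up.} Your paragraph that ``stays faithful to the section's framing'' by summing over the at most five levels is not an argument. Summing the per-card bound level by level just reproduces $\tfrac47$. Drop it, state the direct weight bound, and finish with $\tfrac47<\tfrac58$ and integrality.
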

	
	\begin{proof}
		We showed earlier that \(h\le5\), hence the selected cards can be partitioned into at most five classes
		\[
		L_1,L_2,L_3,L_4,L_5,
		\]
		where cards in the same class are pairwise disjoint. Empty classes are allowed. Let $n_i=|L_i|.$ Then we know that the following equation holds:
		
		\begin{equation}
			n_1+n_2+n_3+n_4+n_5=|U|.
			\label{eq:level-sum}
		\end{equation}
		
		Relabel the classes, if necessary, so that \(L_1\) is a largest class. Since \(L_1,\ldots,L_5\) form a partition of \(U\), equation \eqref{eq:level-sum} gives
		
		$$
		n_1\ge \frac{|U|}{5}.
		$$
		
		Cards in \(L_1\) are pairwise disjoint. Therefore the union of the cards in
		\(L_1\) occupies exactly \(4n_1\) distinct grid squares. Any private square assigned to a card outside \(L_1\) must lie outside the union of the cards in \(L_1\), since every square in this union is already covered by a card of \(L_1\).
		Since every card \(C^i\) on every level needs a private square, and private squares assigned to distinct cards are necessarily distinct, we can write the following chain of inequalities in layer-by-layer form by assigning private squares successively to the remaining available squares as follows:

		\[
		n_2\le m^2-4n_1,
		\]
		\[
		n_3\le m^2-4n_1-n_2,
		\]
		\[
		n_4\le m^2-4n_1-n_2-n_3,
		\]
		\[
		n_5\le m^2-4n_1-n_2-n_3-n_4.
		\]
		The final inequality gives
		\[
		0\le 
		m^2-3n_1-|U|,
		\]
		and so again
		\[
		|U|\le m^2-3n_1\le m^2-\frac{3|U|}{5}.
		\]
		Rearranging the last inequality gives
		\[
		|U|+\frac{3|U|}{5}\le m^2,
		\]
		and hence
		\[
		\frac{8|U|}{5}\le m^2.
		\]
		Therefore
		\[
		|U|\le \frac{5m^2}{8}.
		\]
		Since \(|U|\) is an integer and the inequality holds for every irredundant cover,
		we conclude that
		\[
		F(m)\le \left\lfloor\frac{5m^2}{8}\right\rfloor.
		\]
		
	\end{proof}
	
	\bibliographystyle{plain}
	\bibliography{references}
\end{document}